\newtheorem{thm}{Theorem}[section]
\newtheorem{lem}[thm]{Lemma}
\numberwithin{equation}{section}
\theoremstyle{definition}
\begin{document}
%%%%%%%%%%%%%%%%%%%%%%%%%%%%%%%%%%%%%%%%%%%%%
%%%%%%%%%%%%%%%%%%%%%%%%%%%%%%%%%%%%%%%%%%%%%

\title[Some estimates for Mittag-Leffler function in quantum calculus]{ Some estimates for Mittag-Leffler function in quantum calculus and applications}

\author[Michael Ruzhansky]{Michael Ruzhansky}
\address{
 Michael Ruzhansky:
  \endgraf
Department of Mathematics: Analysis, Logic and Discrete Mathematics,
  \endgraf
 Ghent University, Ghent,
 \endgraf
  Belgium 
  \endgraf
  and 
 \endgraf
 School of Mathematical Sciences, Queen Mary University of London, London,
 \endgraf
 UK  
 \endgraf
  {\it E-mail address} {\rm michael.ruzhansky@ugent.be}
  }

\author[Serikbol Shaimardan]{Serikbol Shaimardan}
\address{
  Serikbol Shaimardan:
  \endgraf
  L. N. Gumilyov Eurasian National University, Astana,
  \endgraf
  Kazakhstan 
  \endgraf
  and 
  \endgraf
Department of Mathematics: Analysis, Logic and Discrete Mathematics
  \endgraf
 Ghent University, Ghent,
 \endgraf
  Belgium
  \endgraf
  {\it E-mail address} {\rm shaimardan.serik@gmail.com} 
  }

\author[N. Tokmagambetov ]{Niyaz Tokmagambetov }
\address{
  Niyaz Tokmagambetov:
  \endgraf 
  Centre de Recerca Matem\'atica
  \endgraf
  Edifici C, Campus Bellaterra, 08193 Bellaterra (Barcelona), Spain
  \endgraf
  and
  \endgraf   
  Institute of Mathematics and Mathematical Modeling
  \endgraf
  125 Pushkin str., 050010 Almaty, Kazakhstan
  \endgraf  
  {\it E-mail address:} {\rm tokmagambetov@crm.cat; tokmagambetov@math.kz}
  }

\thanks{The authors are supported by the FWO Odysseus 1 grant G.0H94.18N: Analysis and Partial Differential Equations and by the Methusalem programme of the Ghent University Special Research Fund (BOF) (Grant number 01M01021). Michael Ruzhansky is also supported by EPSRC grant EP/R003025/2, and  the second author  by the international internship program “Bolashak” of the Republic of Kazakhstan.}
\date{}

\begin{abstract}
The study of the Mittag-Leffler function and its various generalizations has become a very popular topic in mathematics and its applications.  In the present paper we prove the following estimate for the $q$-Mittag-Leffler function: 
\begin{eqnarray*} 
\frac{1}{1+\Gamma_q\left(1-\alpha\right)z}\leq e_{\alpha,1}\left(-z;q\right)\leq\frac{1}{1+\Gamma_q\left(\alpha+1\right)^{-1}z}.
\end{eqnarray*}
for all $0 < \alpha < 1$ and $z>0$.

Moreover, we apply it to investigate the solvability results for direct and inverse problems for time-fractional pseudo-parabolic equations in  quantum calculus for a large class of positive operators with discrete spectrum.  
\end{abstract}

\subjclass[2010]{ 34C10, 39A10, 26D15.}

\keywords{ $q$-heat equation, $q$-derivative, $q$-calculus }

\maketitle

{\section{Introduction}}
%%%%%%%%%%%%%%%%%%%%%%%%%%%%%%%%%%%%%%%%%%%%%%%%%%%%%%%%%%%%%%%
%%%%%%%%%%%%%%%%%%%%%%%%%%%%%%%%%%%%%%%%%%%%%%%%%%%%%%%%%%%%%%%

The theory of  quantum calculus analyses the basic structures of physics from the point of view of non-commutative geometry on  quantum groups. We can obtain a quantum group by considering the classical algebra of observables and quantising it relative to some Poisson bracket.  It is well-known that quantum algebras and quantum groups support the  theory  of several physical phenomena. They  play an important role in the conformal field theory,  in statistical physics \cite{Baxte1982}   and in a wide range of applications, from cosmic strings and black holes to solid state physics problems   \cite{Anyons1992},   \cite{BD1999}, \cite{LSN2006} and \cite{Lavagno} and \cite{Wilczek1990}.

In \cite{Biedenharn} and \cite{Macfarlane},   L. Biedenharn and A. Macfarlane  showed  that the $q$-calculus  plays a central role in the representation of the quantum groups    with a deep physical meaning and is not merely a mathematical exercise. In the framework of the $q$-Heisenberg algebra, $q$-deformed Schr\"odinger equations have
been proposed in \cite{Zhang1} and \cite{Zhang2}.  A number of articles address the   further developments and recent results in the  $q$-deformed calculus
 (see e.g. \cite{AM2012}, \cite{FBB2006}, \cite{Ernst2002},  \cite{MOP2014},   \cite{shaimardan2015}, \cite{shaimardan2016} and we refer to the books \cite{Ernst2012} and  \cite{GR1990}).

 Recently,  it is also  well recognized that the Mittag-Leffler function plays a fundamental role in the fractional calculus problems.  In 1903, the Swedish mathematician G\"osta Mittag-Leffler discovered \cite{Podlubny1999}   the following function   defined by
\begin{eqnarray*} 
E_{\alpha}\left(z\right)=
\sum\limits_{k=0}^{\infty}\frac{z^{k}}{\Gamma(\alpha k+1)}, \;\;\; \Re(\alpha)>0,z\in\mathbb{C},
\end{eqnarray*}
which is convergent in the
whole complex plane. In \cite{W1905} (see   \cite[Subsection 1.8]{KST2006}), A.~Wiman firstly introduced the generalization of the Mittag-Leffler as a two-parameter version $E_{\alpha,\beta}\left(z\right)$   defined by  
\begin{eqnarray*} 
E_{\alpha,\beta}\left(z\right)=
\sum\limits_{k=0}^{\infty}\frac{z^{k}}{\Gamma(\alpha k+\beta)},
\end{eqnarray*}
respectively, where $z,\beta\in\mathbb{C}$ and $\alpha>0$. The following Mittag-Leffler function  estimate is known \cite[Theorem 4]{Simon}:
\begin{eqnarray}\label{additive1.1}
\frac{1}{1+\Gamma\left(1-\alpha\right)z}\leq E_{\alpha,1}\left(-z\right)\leq\frac{1}{1+\Gamma\left(\alpha+1\right)^{-1}z}, 
\end{eqnarray}
and, in particular,  from this inequality it follows that
\begin{eqnarray}\label{additive1.2}
0<E_{\alpha,1}\left(-z\right)<1, 
\end{eqnarray}
for $0<\alpha<1$ and $z>0$.  Moreover,   we get  the  the estimate in the following form
\begin{eqnarray}\label{additive1.3}
\left|E_{\alpha,\beta}\left(-z\right)\right|\leq \frac{C}{1+z},
\end{eqnarray}
where $z>0$, $0<\alpha<2$, $\beta\in\mathbb{R}$ and $C>0$  (see \cite{Agarwal1953} and \cite[Subsection 2.1]{Podlubny1999}. These estimates of Mittag-Leffler functions have been widely applied to  the  solutions of certain problems formulated in terms of fractional order differential, integral and difference equations (see \cite{MH2008}, \cite{KST2006},    \cite{Podlubny1999} and \cite{Tarasov2010}).   Thus,  it has recently become a subject of interest for many authors in the field of fractional calculus and its applications on inverse source problems for the diffusion, sub-diffusion and for other types of equations.  Motivated by these avenues of applications, there are numerous works published only in recent years in this area (for details, see \cite{AKT19},  \cite{RTT19} and \cite{RSTT21}).

In \cite{Mansour2009},  Z. S. I.~Mansour  has introduced  a  $q$-analogue of the Mittag-Leffler function, and obtained a fundamental set of solutions for the homogeneous linear sequential $q$-difference equations with constant coefficients, and a general solution for the corresponding nonhomogeneous equations (see also \cite{AM2012}). The $q$-Mittag-Leffler function due to Mansour \cite{Mansour2009}, and  the  $q$-Mittag-Leffler function  
$e_{\alpha,\beta}\left(z;q\right)$   are given by
\begin{eqnarray}\label{additive1.4}
e_{\alpha,\beta}\left(z;q\right)=
\sum\limits_{k=0}^{\infty}\frac{z^{k}}{\Gamma_q(\alpha k+\beta)},\;\;\;\left|z(1-q)^\alpha\right|<1, 
\end{eqnarray}
when $\alpha>0$, $\beta\in \mathbb{C}$,  and $\Gamma_q$ is the $q$-Gamma function (see (\ref{additive2.9})).

In the present paper,   we establish estimates (\ref{additive1.1}),  (\ref{additive1.2}) and (\ref{additive1.3}) for   the function (\ref{additive1.4})  (see Lemmas \ref{lem3.1} and \ref{lem3.2}). Moreover, we apply these estimates to  prove  existence and uniqueness
of solutions of direct and inverse problems for time-fractional pseudo-parabolic equations with the  Caputo fractional operator in the  quantum calculus,  and  a self-adjoint operator in the abstract setting of Hilbert spaces.

This paper is organized as follows: In Section \ref{sc2}, we recall some necessary fundamental concepts of the quantum calculus.  In Section \ref{sc3}, we give some  lemmas to prove our main results. In Sections \ref{sc4} and \ref{sc5}, we apply these results to study the existence and uniqueness of  generalized solutions for the direct problems   and the inverse problems (see Theorem \ref{thm4.1}, Theorem \ref{thm4.2} and Theorem \ref{thm5.1}).

 \;\;\;

%%%%%%%%%%%%%%%%%%%%%%%%%%%%%%%%%%%%%%%%%%%%%%%%%%%%%%%%%%%%%%%
%%%%%%%%%%%%%%%%%%%%%%%%%%%%%%%%%%%%%%%%%%%%%%%%%%%%%%%%%%%%%%%

 \section{Preliminaries}\label{sc2} 
%%%%%%%%%%%%%%%%%%%%%%%%%%%%%%%%%%%%%%%%%%%%%%%%%%%%%%%%%%%%%%%
%%%%%%%%%%%%%%%%%%%%%%%%%%%%%%%%%%%%%%%%%%%%%%%%%%%%%%%%%%%%%%%

   For the convenience of the reader,  in this section, we recall some necessary concepts and definitions of the quantum calculus, for which we refer to \cite{ChKa2000, Ernst2002, GR1990} . Moreover,  we give  lemmas will be used to  prove of our main results. Throughout this paper, we assume that $0<q<1$.

Let $\alpha \in \mathbb{R}$. Then a $q$-real number $[\alpha ]_{q}$ is defined by
$$
[\alpha ]_{q}:=\frac{1-q^{\alpha }}{1-q},
$$
where $\mathop{\lim }\limits_{q\rightarrow 1}\frac{1-q^{\alpha }}{1-q}
=\alpha $.

For   $a\in\mathbb{R}$ and $n=0,1,\cdots$, the $q$-shifted factorials are defined by
 \begin{eqnarray}\label{additive2.1}
(a;q)_0=1, \;\;\; (a;q)_n=\prod\limits_{k=0}^n\left(1-q^ka\right), \;\;\;(a;q)_\infty&=&\lim\limits_{n\rightarrow\infty}(a;q)_n.
\end{eqnarray}

Note that   

 \begin{eqnarray}\label{additive2.2}
 (a;q)_{n}=\frac{(a;q)_\infty}{(aq^n;q)_\infty}, \;\;\;(a;q)_{2n}=(a;q^2)_n(aq;q^2)_n.
 \end{eqnarray}

The  function $e_q(x)$  is a $q$-exponential function  defined by its $q$-Taylor series (\cite{ChKa2000}):
\begin{eqnarray}\label{additive2.3}
e_q(x)=\sum\limits_{k=0}^\infty\frac{x^{k}}{[k]_{q}!}, 
\end{eqnarray}
where 
\begin{equation*}
[k]_{q}!=\left\{
\begin{array}{l}
{1,\mathrm{\;\;\;\;\;\;\;\;\;\;\;\;\;\;\;\;\;\;\;\;\;\;\;\;\;\;\;\;\;\;\;\;%
\;if\;{\it k}}=\mathrm{0,}} \\
{[1]_{q}\times [2]_{q}\times \cdots \times [n]_{q},\mathrm{%
\;if\;{\it k}}\in \mathrm{ \mathbb{N}.\;\;}}%
\end{array} \right.
\end{equation*}

Recall that the $q$-difference operator (or the $q$-derivative), acting on functions of the  variable $x$, $D_q$ is defined by 
\begin{eqnarray}\label{additive2.4}
D_{q}f(x)=\frac{f(x)-f(qx)}{x(1-q)}.
\end{eqnarray}
Note that if $f$ is differentiable at $x$, then
$\lim\limits_{q\rightarrow1}D_{q}f(x)=f'(x)$, and the operator $D_q$ is called  the Euler–Jackson difference operator (see \cite{Jackson1908}). Note that
\begin{eqnarray}\label{additive2.5}
D_{q}\left[x^\alpha(s/x;q)_\alpha\right]=-[\alpha]_qx^{\alpha-1}(qs/x;q)_{\alpha-1}, \;\;\;\alpha\in\mathbb{R},
\end{eqnarray}
where
\begin{eqnarray*} 
a^\alpha(s/a;q)_\alpha=a^\alpha\frac{(s/a;q)_\infty}{(q^{\alpha}s/a;q)_\infty}. 
\end{eqnarray*}

The $q$-translation operator $\varepsilon^y$  is presented by M. E. H.~Ismail in \cite{Ismail2005} and is defined  by
 \begin{eqnarray*} 
 \varepsilon^yx^\alpha=x^\alpha(y/x;q)_\alpha,
\end{eqnarray*}
and its extension is given as follows (see, \cite[Section 1.2]{AM2012}): 
\begin{eqnarray}\label{additive2.6}
\varepsilon^y\left(\sum\limits_{k=0}^\infty\alpha_kx^\alpha\right)= \sum\limits_{k=0}^\infty \alpha_kx^\alpha(y/x;q)_\alpha.
\end{eqnarray}

Therefore, using (\ref{additive2.6}) we can take a more general form of (\ref{additive1.4}) as follows 
\begin{eqnarray}\label{additive2.7}
\varepsilon^ye_{\alpha,\beta}\left(z^\gamma;q\right)=
\sum\limits_{k=0}^{\infty}\frac{x^{\alpha{k}}(y/x;q)_{\alpha{k}}}{\Gamma_q(\alpha k+\beta)}.  
\end{eqnarray}

The $q$-integral (or Jackson integral) is defined as (see \cite{Jackson1910})
\begin{eqnarray*}
\int\limits_0^a f(x)d_{q}x=(1-q)a\sum\limits_{m=0}^\infty q^{m}f(aq^{m})
\end{eqnarray*}
and
\begin{eqnarray*} 
\int\limits_0^\infty f(x)d_{q}x=(1-q)\sum\limits_{m=-\infty}^\infty q^{m}f(q^{m}),
\end{eqnarray*}
provided that the sums converge absolutely. The $q$-Jackson integral in the  interval $[a, b]$ is defined by
\begin{eqnarray*}
\int\limits_a^b f(x)d_{q}x=\int\limits_0^b f(x)d_{q}x-
\int\limits_0^a f(x)d_{q}x.
\end{eqnarray*}

The formula for the $q$–integration by parts is
\begin{eqnarray}\label{additive2.8}
\int\limits_a^b u(x)D_qv(x)d_qx=\left[u(x)v(x)\right]_a^b-\int\limits_a^bv(qx)D_qu(x)d_qx.
\end{eqnarray}

For any $x>0$, the $q$–gamma function is defined by 
\begin{eqnarray}\label{additive2.9}
\Gamma_q(x)=\frac{(q;q)_\infty }{(q^x; q)_\infty}(1-q)^{1-x},\;\;\;\;\Gamma_q\left(x+1\right)=\Gamma_q\left(x\right)\left[x\right]_q.
\end{eqnarray}

The  Riemann-Liouville $q$-fractional integral $I_{q,a+}^{\alpha }f$ of order $\alpha > 0$ is defined by (\cite{Raj})
\begin{equation*} 
  \left( I_{q,a+}^{\alpha }f \right)\left( x \right)=\frac{1}{{{\Gamma }_{q}}\left( \alpha  \right)}\int\limits_{a}^{x}{{{\left( x-qt \right)}^{\alpha -1}_{q}}f\left( t \right){{d}_{q}}t}.  
\end{equation*}

The fractional $q$-derivative of Caputo type of order $\alpha > 0$ is defined by (\cite{Raj})
\begin{equation}\label{additive2.10}
  \left(^cD_{q,a+}^{\alpha }f \right)\left( x \right)=\left(I_{q,a+}^{\left[ \alpha  \right]-\alpha } D_{q,a+}^{\left[ \alpha  \right]}f \right)\left( x \right),  
\end{equation}
where $[\alpha]$ denotes the smallest integer greater or equal to $\alpha$.

\textbf{Notation.}  The symbol $M \lesssim K$ means that there exists $\gamma> 0$ such that
$M \leq \gamma K$, where $\gamma$ is a constant.   If $M \lesssim K \lesssim  M$, then we
write $M \approx K$.

\;\;\;\;

%%%%%%%%%%%%%%%%%%%%%%%%%%%%%%%%%%%%%%%%%%%%%%%%%%%%%%%%%%%%%%%
%%%%%%%%%%%%%%%%%%%%%%%%%%%%%%%%%%%%%%%%%%%%%%%%%%%%%%%%%%%%%%%
\section{ Some estimates of  the  $q$-Mittag-Leffler function }\label{sc3}
%%%%%%%%%%%%%%%%%%%%%%%%%%%%%%%%%%%%%%%%%%%%%%%%%%%%%%%%%%%%%%%
%%%%%%%%%%%%%%%%%%%%%%%%%%%%%%%%%%%%%%%%%%%%%%%%%%%%%%%%%%%%%%
In this section we establish estimates (\ref{additive1.1}),  (\ref{additive1.2}) and (\ref{additive1.3}) for   the function (\ref{additive1.4}) in the following lemmas.

\begin{lem}\label{lem3.1} If $0<\alpha <1$ and $z >0$. Then
\begin{eqnarray}\label{additive3.1}
\frac{1}{1+\Gamma_q\left(1-\alpha\right)z}\leq e_{\alpha,1}\left(-z;q\right)\leq\frac{1}{1+\Gamma_q\left(\alpha+1\right)^{-1}z}.
\end{eqnarray}

In particular, we have
\begin{eqnarray}\label{additive3.2}
0<e_{\alpha,1}\left(-z;q\right)<1.
\end{eqnarray}
\end{lem}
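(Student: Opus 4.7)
My plan is to establish the two inequalities in (\ref{additive3.1}) separately, following the spirit of Simon's classical argument \cite{Simon} but replacing the complete-monotonicity / Laplace-transform tools by $q$-analogues compatible with the series (\ref{additive1.4}). The two outer bounds share the value $1$ at $z=0$, and their first-order expansions as $z\to 0$ agree with that of $e_{\alpha,1}(-z;q)$, namely $1 - z/\Gamma_q(\alpha+1)+O(z^2)$, exactly on the upper side and are consistent on the lower side provided the auxiliary inequality $\Gamma_q(1-\alpha)\,\Gamma_q(\alpha+1) \ge 1$ holds for $0<\alpha<1$. I would derive this $\Gamma_q$ bound directly from the definition (\ref{additive2.9}) and the functional equation $\Gamma_q(x+1)=[x]_q\,\Gamma_q(x)$.

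For the upper bound I would prove $\bigl(1 + z/\Gamma_q(\alpha+1)\bigr)\,e_{\alpha,1}(-z;q) \le 1$. Expanding the left-hand side via (\ref{additive1.4}) yields the series
\begin{equation*}
1 + \sum_{m\ge 1} (-1)^m\left[\frac{1}{\Gamma_q(\alpha m+1)} - \frac{1}{\Gamma_q(\alpha+1)\,\Gamma_q(\alpha(m-1)+1)}\right] z^m.
\end{equation*}
The $m=1$ coefficient vanishes, and for $m\ge 2$ the bracket is non-positive iff
\begin{equation*}
\Gamma_q(\alpha+1)\,\Gamma_q(\alpha(m-1)+1) \le \Gamma_q(\alpha m+1),
\end{equation*}
a $q$-analogue of the log-superadditivity of $\Gamma$ on $[1,\infty)$, which I would prove by induction on $m$ using $\Gamma_q(x+1)=[x]_q\Gamma_q(x)$ together with the monotonicity of $[x]_q$. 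The lower bound is then obtained by the same template applied to $(1+\Gamma_q(1-\alpha)\,z)\,e_{\alpha,1}(-z;q) \ge 1$, now with the reverse $\Gamma_q$-inequality and the auxiliary bound from the first paragraph. Finally, (\ref{additive3.2}) follows at once from (\ref{additive3.1}), since both rational bounds lie strictly between $0$ and $1$ when $z>0$.

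The principal obstacle is that (\ref{additive1.4}) is a sign-alternating series, so the coefficient signs above also alternate and a naive termwise comparison is inconclusive. To close this gap I see two routes: first, work with the partial sums $S_N(z)=\sum_{k=0}^N(-z)^k/\Gamma_q(\alpha k+1)$ inside the disk $|z(1-q)^\alpha|<1$ guaranteed by (\ref{additive1.4}), prove the desired inequality for even and odd truncations by pairing consecutive terms using the $\Gamma_q$-bounds above, and extend the result to all $z>0$ by analytic continuation or monotonicity in $z$; second, construct a $q$-Jackson-integral representation of $e_{\alpha,1}(-z;q)$ as a $q$-Laplace transform of a positive density, a $q$-analogue of Pollard's representation, which would give complete monotonicity and thereby Simon's argument verbatim. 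I would pursue the first route first because it stays within the elementary toolkit developed in Section~\ref{sc2}.
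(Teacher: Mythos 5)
Your opening move coincides with the paper's: multiply $e_{\alpha,1}(-z;q)$ by $1+\Gamma_q(1-\alpha)z$ (resp.\ by $1+\Gamma_q(\alpha+1)^{-1}z$), expand via (\ref{additive1.4}), and reduce the claim to the sign or size of the coefficients
\begin{equation*}
\frac{1}{\Gamma_q(\alpha k+1)}-\frac{\Gamma_q(1-\alpha)}{\Gamma_q(\alpha(k-1)+1)},
\qquad
\frac{\Gamma_q(\alpha+1)^{-1}}{\Gamma_q(\alpha(k-1)+1)}-\frac{1}{\Gamma_q(\alpha k+1)}.
\end{equation*}
But the argument then stalls exactly where you say it does. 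Because the series alternates, the sign of each bracket alone says nothing about the sign of the sum, and neither of your two proposed repairs is carried out. Pairing consecutive terms of the truncations requires the terms $|b_m|z^m$ to decrease; since $\Gamma_q(\alpha m+1)$ grows only geometrically (like $(1-q)^{-\alpha m}$ up to bounded factors, which is precisely why (\ref{additive1.4}) carries the restriction $|z(1-q)^\alpha|<1$), this happens only eventually and only inside that disk, so the first several terms and all $z$ outside the disk remain untreated; an \emph{inequality} cannot be propagated by analytic continuation, and no monotonicity in $z$ has been established. The $q$-analogue of Pollard's representation named in your second route would indeed settle everything, but it is a substantial result that you would have to prove, and nothing in Section~\ref{sc2} supplies it. Your auxiliary $\Gamma_q$-inequalities are plausible (and the induction via $\Gamma_q(x+1)=[x]_q\Gamma_q(x)$ is reasonable), but by themselves they do not close the argument: as written the proposal is a plan whose central step is left open.

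For comparison, the paper's way past this point is different in kind: rather than seeking a favorable sign for each bracket, it shows by explicit manipulation of the $q$-shifted factorials in (\ref{additive2.9}) that each bracket is comparable \emph{from above and below} to $(1-q)^{(\alpha-1)k}/\Gamma_q(k+1)$ (estimates (\ref{additive3.7}), (\ref{additive3.14}) for the lower bound and (\ref{additive3.22}), (\ref{additive3.27}) for the upper bound). This identifies the entire correction series $\sum_k(-z)^kN_q(k)$, up to two-sided constants, with the $q$-exponential $e_q\left(-z(1-q)^{\alpha-1}\right)$ of (\ref{additive2.3}), whose value at a negative argument has a known sign, and the two bounds in (\ref{additive3.1}) follow, with (\ref{additive3.2}) as an immediate consequence. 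That identification with a single concrete positive function is the ingredient missing from your proposal.
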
 
 \begin{proof} {\it The lower estimate}.  We assume that  $0<\alpha <1$. Then  using (\ref{additive2.1}) we get that 
\begin{eqnarray}\label{additive3.3}
q/q^{1-\alpha}=q^{\alpha}\leq1\Rightarrow q\leq q^{1-\alpha} 
&\Rightarrow& 1-q^{1-\alpha}\leq1-q  \nonumber\\
&\Rightarrow&\prod\limits_{n=0}^\infty(1-q^nq^{1-\alpha})\leq\prod\limits_{n=0}^\infty(1-q^nq) \nonumber\\
&\overset{ \text{(\ref{additive2.1})}}{\Rightarrow}&\gamma_{1,q}:=\frac{\left(q;q\right)_\infty}{\left(q^{1-\alpha};q\right)_\infty}\geq 1;
\end{eqnarray}
\begin{eqnarray}\label{additive3.4}
q^k/q^{k\alpha}=q^{k(1-\alpha)}\leq1\Rightarrow q^k\leq q^{k\alpha}&\Rightarrow&1-q^{\alpha{k}+1}\leq1-q^{k+1} \nonumber\\
&\overset{ \text{(\ref{additive2.1})}}{\Rightarrow}&\gamma_{2,q}:=\frac{\left(q^{\alpha{k}+1};q\right)_\infty}{\left(q^{k+1};q\right)_\infty} \leq 1;
\end{eqnarray}
\begin{eqnarray}\label{additive3.5}
q^{k+1}/q^{\alpha{k}+1-\alpha}=q^{k(1-\alpha)+\alpha}\leq1&\Rightarrow& q^{k+1}\leq q^{\alpha{k}+1-\alpha}\nonumber\\
&\Rightarrow&1-q^{\alpha{k}+1-\alpha}\leq1-q^{k+1}\nonumber\\
&\overset{ \text{(\ref{additive2.1})}}{\Rightarrow}&\gamma_{3,q}:=\frac{\left(q^{\alpha{k}+1-\alpha};q\right)_\infty}{\left(q^{k+1};q\right)_\infty}\leq 1,
\end{eqnarray}
 for   $k=0,1,2,3,\cdots$.

 Applying (\ref{additive2.9}) and  these inequalities (\ref{additive3.3})-(\ref{additive3.5}), we find that 
\begin{eqnarray*}  
N_q(k)&:=& \frac{\left(q^{\alpha{k}+1};q\right)_\infty}{\left(q;q\right)_\infty}(1-q)^{\alpha{k}}- \frac{\left(q^{\alpha(k-1)+1};q\right)_\infty}{\left(q^{1-\alpha};q\right)_\infty}(1-q)^{\alpha{k}} \nonumber\\
&\overset{ \text{(\ref{additive3.3})(\ref{additive3.4})}}{=}&(1-q)^{\alpha{k}}\left[\gamma_{2,q} 
\frac{\left(q^{k+1};q\right)_\infty}{\left(q;q\right)_\infty} - \gamma_{1,q} 
\frac{\left(q^{\alpha(k-1)+1};q\right)_\infty}{\left(q;q\right)_\infty}\right] \nonumber\\
&\leq&(1-q)^{\alpha{k}}\left[\frac{\left(q^{k+1};q\right)_\infty}{\left(q;q\right)_\infty}- \frac{\left(q^{\alpha{k}+1-\alpha};q\right)_\infty}{\left(q;q\right)_\infty}  \right]\nonumber\\ 
&\overset{ \text{(\ref{additive3.5})}}{=}&\frac{\left(q^{k+1};q\right)_\infty}{\left(q;q\right)_\infty}(1-q)^{\alpha{k}}\left[1- \gamma_{3,q}   \right]
\end{eqnarray*}
\begin{eqnarray}\label{additive3.6}
&\leq&(1-q)^{\alpha{k}-k} \left[(1-q)^{1-(k+1)}\frac{\left(q;q\right)_\infty}{\left(q^{k+1};q\right)_\infty}\right]^{-1} \nonumber\\
&\overset{ \text{(\ref{additive2.9})} }{=}&\frac{(1-q)^{(\alpha-1)k}}{\Gamma_q\left(k+1\right)}.
\end{eqnarray}

Thus, 
\begin{eqnarray}\label{additive3.7}
N_q(k)&\lesssim&\frac{(1-q)^{(\alpha-1)k}}{\Gamma_q\left(k+1\right)}.
\end{eqnarray}

We assume that $0<\epsilon<1-\alpha$. Then using (\ref{additive2.1}) we write the following inequalities 
\begin{eqnarray}\label{additive3.8}
q^{{\alpha}k+\alpha}/q^{{\alpha}k+1}=q^{\alpha-1}\geq1 \Rightarrow q^{{\alpha}k+1}\leq q^{{\alpha}k+\alpha}&\Rightarrow&1-q^{{\alpha}k+\alpha}\leq1-q^{{\alpha}k+1}\nonumber\\
&\overset{ \text{(\ref{additive2.1})}}{\Rightarrow}& \left(q^{{\alpha}k+\alpha};q\right)_\infty\leq \left(q^{{\alpha}k+1};q\right)_\infty; 
 \end{eqnarray}
\begin{eqnarray}\label{additive3.9}
q^{k+1}/q^{{\alpha}k+1-\alpha}=q^{k(1-\alpha)+\alpha}\leq1&\Rightarrow& q^{k+1}\leq q^{{\alpha}k+1-\alpha}\nonumber\\
&\overset{ \text{(\ref{additive2.1})}}{\Rightarrow}& \left(q^{{\alpha}k+1-\alpha};q\right)_\infty\leq \left(q^{k+1};q\right)_\infty;
 \end{eqnarray}
 \begin{eqnarray}\label{additive3.10}
q/q^{1-\alpha}=q^{\alpha}\leq1&\Rightarrow &q \leq q^{1-\alpha} \nonumber\\
 &\overset{ \text{(\ref{additive2.1})}}{\Rightarrow}&   \left(q^{1-\alpha};q\right)_\infty\leq\left(q;q\right)_\infty; 
 \end{eqnarray}
\begin{eqnarray*}
\left.\begin{array}{rcl}
 q^{1-\alpha}/ (q^\epsilon/3)= 3 q^{1-\alpha-\epsilon}\leq3\Rightarrow q^{1-\alpha}\leq 3(q^\epsilon/3) &\overset{ \text{(\ref{additive2.1})}}{\Rightarrow}3\leq \frac{\left(q^{1-\alpha};q\right)_\infty}{\left(q^\epsilon/3;q\right)_\infty} \\
q^{k+1}/q^{\alpha{k}+\alpha}=q^{-\alpha(k+1)}\geq1   \Rightarrow   q^{\alpha{k}+\alpha}\leq q^{k+1}&\overset{ \text{(\ref{additive2.1})}}{\Rightarrow} 1\leq \frac{\left(q^{\alpha{k}+\alpha};q\right)_\infty }{\left(q^{k+1};q\right)_\infty}
\end{array}\right\}\Rightarrow
\end{eqnarray*}
\begin{eqnarray}\label{additive3.11}
\Rightarrow \gamma_{4,q}:= \frac{\left(q^{1-\alpha};q\right)_\infty}{\left(q^\epsilon/3;q\right)_\infty}\frac{\left(q^{\alpha{k}+\alpha};q\right)_\infty }{\left(q^{k+1};q\right)_\infty}\geq 3.
 \end{eqnarray}

Let $q_0:=q^\epsilon/3$ and $\gamma_{5,q}:=\frac{1-q_0^{1-\epsilon\alpha}}{q^{\epsilon(1-\epsilon\alpha)}(1-q_0)}$. Then $\lim\limits_{\epsilon\rightarrow0} \gamma_{5,q}=1$ and 
 \begin{eqnarray}\label{additive3.12}
qq_0/q=q_0\leq1\Rightarrow qq_0\leq q
&\Rightarrow&  1-q \leq 1-qq_0\nonumber\\
&\overset{ \text{(\ref{additive2.1})}}{\Rightarrow}&  \left(q ;q\right)_\infty\leq\left(qq_0;q\right)_\infty, 
 \end{eqnarray}
and applying above inequalities and  (\ref{additive2.1}) and  (\ref{additive3.8})-(\ref{additive3.12}) we have that
\begin{eqnarray*}   
\widehat{N}_q(k)&:=& \gamma_{5,q}\frac{\left(q^{\alpha{k}+1};q\right)_\infty}{\left(q;q\right)_\infty}- q^\epsilon \frac{\left(q^{\alpha(k-1)+1};q\right)_\infty}{\left(q^{1-\alpha};q\right)_\infty}\nonumber\\
&\overset{ \text{ (\ref{additive3.8})(\ref{additive3.9})(\ref{additive3.12})} }{\geq}& \frac{1-q_0^{1-\epsilon\alpha}}{q^{\epsilon(1-\epsilon\alpha)}(1-q_0)}\frac{\left(q^{\alpha{k}+\alpha};q\right)_\infty}{\left(qq_0;q\right)_\infty}- q^\epsilon \frac{\left(q^{k+1};q\right)_\infty}{\left(q^{1-\alpha};q\right)_\infty}\nonumber\\
&\overset{ \text{(\ref{additive2.1})} }{=}&   \frac{1-q_0^{1-\epsilon\alpha}}{q^{\epsilon(1-\epsilon\alpha)} }\frac{\left(q^{\alpha{k}+\alpha};q\right)_\infty}{\left(q_0;q\right)_\infty}-  q^\epsilon \frac{\left(q^{k+1};q\right)_\infty}{\left(q^{1-\alpha};q\right)_\infty} \nonumber\\
&=& \frac{\left(q^{k+1};q\right)_\infty}{\left(q^{1-\alpha};q\right)_\infty} \left[\frac{ 1-3^{\epsilon\alpha-1}q^{\epsilon(1-\epsilon\alpha)} }{q^{\epsilon(1-\epsilon\alpha)}}\gamma_{4,q}- q^\epsilon  \right]
\end{eqnarray*}
\begin{eqnarray}\label{additive3.13}
&\overset{ \text{(\ref{additive3.11})} }{\geq}& \frac{\left(q^{k+1};q\right)_\infty}{\left(q^{1-\alpha};q\right)_\infty} \left[3\frac{ (1-3^{\epsilon\alpha-1}q^{\epsilon(1-\epsilon\alpha)})}{q^{\epsilon(1-\epsilon\alpha)}} - q^\epsilon  \right]\nonumber\\
&\overset{ \text{(\ref{additive3.10})} }{\geq}& \frac{\left(q^{k+1};q\right)_\infty}{\left(q ;q\right)_\infty}3\left[q^{-\epsilon(1-\epsilon\alpha)}-3^{\epsilon\alpha-1}    - q^\epsilon/3 \right].
\end{eqnarray}

From (\ref{additive2.9}), (\ref{additive3.6})  and  (\ref{additive3.13}), we get that
\begin{eqnarray*}  
N_q(k)&\overset{ \text{(\ref{additive3.6})} }{=}&(1-q)^{\alpha{k}}\left[\frac{\left(q^{\alpha{k}+1};q\right)_\infty}{\left(q;q\right)_\infty}- \frac{\left(q^{\alpha(k-1)+1};q\right)_\infty}{\left(q^{1-\alpha};q\right)_\infty}\right]\\
&\overset{ \text{(\ref{additive3.13})} }{=}&(1-q)^{\alpha{k}}\lim\limits_{\epsilon\rightarrow0}\widehat{N}_q(k) \\
&\geq& (1-q)^{\alpha{k}} \frac{\left(q^{k+1};q\right)_\infty}{\left(q;q\right)_\infty} 3\left[1-1/3- 1/3 \right]\\
&\geq&(1-q)^{\alpha{k}-k} \left[(1-q)^{1-(k+1)}\frac{\left(q;q\right)_\infty}{\left(q^{k+1};q\right)_\infty}\right]^{-1}\\
&\overset{ \text{(\ref{additive2.9})} }{=}&\frac{(1-q)^{(\alpha-1)k}}{\Gamma_q\left(k+1\right)},
 \end{eqnarray*}
for   $k=0,1,2,3,\cdots$. 
 
Hence,
\begin{eqnarray}\label{additive3.14}
\frac{(1-q)^{(\alpha-1)k}}{\Gamma_q\left(k+1\right)}\lesssim N_q(k).
\end{eqnarray}

By  using (\ref{additive1.4}), (\ref{additive3.6}), (\ref{additive3.7}) and  (\ref{additive3.14})  we find
\begin{eqnarray*} 
\frac{e_{\alpha,1}\left(-z;q\right)}{\left[1+ \Gamma_q\left(1-\alpha\right)z\right]^{-1}}&=&e_{\alpha,1}\left(-z;q\right)+e_{\alpha,1}\left(-z;q\right)\Gamma_q\left(1-\alpha\right)z\nonumber\\&\overset{ \text{(\ref{additive1.4})}}{=}&\sum\limits_{k=0}^\infty\frac{\left(-z\right)^k}{\Gamma_q\left(\alpha{k}+1\right)}
+\sum\limits_{k=0}^\infty\frac{\left(-z\right)^{k+1}\Gamma_q\left(1-\alpha\right)}{\Gamma_q\left(\alpha{k}+1\right)}\nonumber\\
&=&1+\sum\limits_{k=0}^\infty\frac{\left(-z\right)^k}{\Gamma_q\left(\alpha{k}+1\right)}
-\left[1+ \sum\limits_{k=1}^\infty\frac{\left(-z\right)^k\Gamma_q\left(1-\alpha\right)}{\Gamma_q\left(\alpha(k-1)+1\right)}\right]\nonumber\\
&=&1+\sum\limits_{k=0}^\infty\frac{\left(-z\right)^k}{\Gamma_q\left(\alpha{k}+1\right)} - \sum\limits_{k=0}^\infty\frac{\left(-z\right)^k\Gamma_q\left(1-\alpha\right)}{\Gamma_q\left(\alpha(k-1)+1\right)} \nonumber\\
&=&1+\sum\limits_{k=0}^\infty\left(-z\right)^k\left[\frac{1}{\Gamma_q\left(\alpha{k}+1\right)} -  \frac{ \Gamma_q\left(1-\alpha\right)}{\Gamma_q\left(\alpha(k-1)+1\right)} \right]\nonumber\\
&\overset{ \text{(\ref{additive2.1})}}{=}&1+\sum\limits_{k=0}^\infty\left(-z\right)^k\left[\frac{\left(q^{\alpha{k}+1};q\right)_\infty}{\left(q;q\right)_\infty}(1-q)^{\alpha{k}}\right.\nonumber\\
&-&\left. \frac{\left(q^{\alpha(k-1)+1};q\right)_\infty}{\left(q^{1-\alpha};q\right)_\infty}(1-q)^{\alpha{k}} \right]
\end{eqnarray*}
\begin{eqnarray*}
&\overset{ \text{(\ref{additive3.6})}}{=}&1+\sum\limits_{k=0}^\infty\left(-z\right)^kN_q(k)\nonumber\\
&\overset{ \text{(\ref{additive3.7})(\ref{additive3.14})}}{ \approx}&1+\sum\limits_{k=0}^\infty\left(-z\right)^k\frac{(1-q)^{\alpha{k}-k}}{\Gamma_q\left(k+1\right)}\nonumber\\
&\overset{ \text{(\ref{additive2.9})}}{=}&1+\sum\limits_{k=0}^\infty\left(-\frac{z}{(1-q)^{1-\alpha}}\right)^k\frac{1}{[k]_q!} \nonumber\\
&\overset{ \text{(\ref{additive2.3})}}{=}&1+ e_q{\left(-\frac{z}{(1-q)^{1-\alpha}}\right)} \nonumber\\
&\geq&1.
\end{eqnarray*}

Therefore
\begin{eqnarray}\label{additive3.15}
0<\frac{1}{1 +\Gamma_q\left(1-\alpha\right)z}\leq e_{\alpha,1}\left(-z;q\right),
\end{eqnarray}
and the lower estimates in (\ref{additive3.1}) and (\ref{additive3.2}) are proved.

{\it The upper estimate}. By using (\ref{additive2.1}) for $0<\alpha <1$ and $k\in\mathbb{N}$,  we have the following inequalities
\begin{eqnarray}\label{additive3.16}
q^{\alpha+1}/q=q^\alpha\leq1\Rightarrow1-q\leq1-q^{\alpha+1}&\Rightarrow&\prod\limits_{n=0}^\infty\left(1-q^{n+1}\right)\leq\prod\limits_{n=0}^\infty\left(1-q^{\alpha+1+n}\right)\nonumber\\
&\overset{ \text{(\ref{additive2.1})}}{\Rightarrow}& 1\leq\frac{\left(q^{\alpha+1};q\right)_\infty}{ \left(q;q\right)_\infty};
\end{eqnarray}
\begin{eqnarray}\label{additive3.17}
q^{\alpha(k-1)+1}/q=q^{\alpha(k-1)}\leq1{\Rightarrow}q^{\alpha(k-1)+1}\leq{q}&\Rightarrow&1-q\leq1-q^{\alpha(k-1)+1} \nonumber\\
&\overset{ \text{(\ref{additive2.1})}}{\Rightarrow}& 1\leq\frac{\left(q^{\alpha(k-1)+1};q\right)_\infty}{ \left(q;q\right)_\infty};
\end{eqnarray}
 \begin{eqnarray}\label{additive3.18}
q^{\alpha{k}+1}/q^{k+1}=q^{\alpha -1}\geq1{\Rightarrow}q^{\alpha{k}+1}\geq{q^{k+1}}&\Rightarrow&1-q^{\alpha{k}+1}\leq1-q^{k+1} \nonumber\\
&\overset{ \text{(\ref{additive2.1})}}{\Rightarrow}& 1\geq\frac{\left(q^{\alpha{k}+1};q\right)_\infty}{ \left(q^{k+1};q\right)_\infty}.
\end{eqnarray}

We assume that $0<\xi<\alpha$. Then using the inequality $\left(q^{k+1};q\right)_\infty\leq1$, for $k\in\mathbb{N}$,    and (\ref{additive3.16}) - (\ref{additive3.18}), we find that 
\begin{eqnarray}\label{additive3.19}
\gamma_{6,q}&:=&\frac{1}{1-\xi}\frac{(1-q)^{\alpha{k}}}{\left(\xi{q};q\right)_\infty} \frac{\left(q^{\alpha+1};q\right)_\infty}{\left(q;q\right)_\infty} \frac{\left(q^{\alpha(k-1)+1};q\right)_\infty}{\left(q ;q\right)_\infty}\nonumber\\
&\overset{ \text{(\ref{additive3.16})(\ref{additive3.17})}}{\geq}&\frac{1}{1-\xi} \frac{(1-q)^{\alpha{k}}}{\left(\xi{q};q\right)_\infty} \nonumber\\
&\geq & \frac{(1-q)^{\alpha{k}} }{1-\xi}\frac{\left(q^{k+1};q\right)_\infty}{\left(q;q\right)_\infty}\nonumber\\
&\geq & \frac{(1-q)^{\alpha{k}} }{1-\alpha}\frac{\left(q^{k+1};q\right)_\infty}{\left(q;q\right)_\infty},
\end{eqnarray}
where $\lim\limits_{\xi\rightarrow0}\frac{q^\xi}{\left(\xi{q};q\right)_\infty}=1$ and 
\begin{eqnarray}\label{additive3.20}
\gamma_{7,q}&:=& q^\xi(1-q)^{\alpha{k}}\frac{\left(q^{\alpha{k}+1};q\right)_\infty}{\left(q;q\right)_\infty}\nonumber\\
&=& q^\xi(1-q)^{\alpha{k}}\frac{\left(q^{k+1};q\right)_\infty}{\left(q;q\right)_\infty} \frac{\left(q^{\alpha{k}+1};q\right)_\infty}{\left(q^{k+1};q\right)_\infty}\nonumber\\
&\overset{(\ref{additive3.18})}{\leq}& q^\xi (1-q)^{\alpha{k}}\frac{\left(q^{k+1};q\right)_\infty}{\left(q;q\right)_\infty}.
\end{eqnarray}

From (\ref{additive2.1}), (\ref{additive2.9}) and   (\ref{additive3.19})-(\ref{additive3.20}) we have that  
\begin{eqnarray}\label{additive3.21}
M_q(k)&:=&\frac{\Gamma_q^{-1}\left(\alpha+1\right)}
{ \Gamma_q\left(\alpha(k-1)+1\right)}-\frac{1}{2\Gamma_q\left(\alpha{k}+1\right)}\nonumber\\
&\overset{ \text{(\ref{additive2.1})}}{=}&(1-q)^{\alpha{k}}\frac{\left(q^{\alpha+1};q\right)_\infty}{\left(q;q\right)_\infty}\frac{\left(q^{\alpha(k-1)+1};q\right)_\infty}{\left(q;q\right)_\infty} -  (1-q)^{\alpha{k}} \frac{\left(q^{\alpha{k}+1};q\right)_\infty}{\left(q;q\right)_\infty}  \nonumber\\
& \overset{ \text{(\ref{additive3.19})(\ref{additive3.20})}}{=}&\lim\limits_{\xi\rightarrow0}\left[\gamma_{6,q}-\gamma_{7,q}\right] \nonumber\\
&\geq&(1-q)^{\alpha{k}}\frac{\left(q^{k+1};q\right)_\infty}{\left(q;q\right)_\infty}\lim\limits_{\xi\rightarrow0}\left[\frac{1}{1-\alpha}-q^\xi\right] \nonumber\\
&=& (1-q)^{\alpha{k}-k} \left[(1-q)^{(k+1)-1}\frac{\left(q^{k+1};q\right)_\infty}{\left(q;q\right)_\infty}\right]\left[\frac{1}{1-\alpha}-1\right]\nonumber\\
&\overset{ \text{(\ref{additive2.9})}}{=}&\frac{\alpha}{1-\alpha}\frac{(1-q)^{\alpha{k}-k}}{\Gamma_q\left(k+1\right)}.
\end{eqnarray}

Thus, 
\begin{eqnarray}\label{additive3.22}
 \frac{(1-q)^{\alpha{k}-k}}{\Gamma_q\left(k+1\right) } &\lesssim&M_q(k).
\end{eqnarray}

Again, for  $k\in\mathbb{N}$ using (\ref{additive2.1}), we find the following inequalities 
\begin{eqnarray}\label{additive3.23}
q^{\alpha{k}+1}/q^{\alpha(k-1)+1}=q^{-\alpha}\geq1 &\Rightarrow& 1-q^{\alpha(k-1)+1}  \leq  1-q^{\alpha{k}+1} \nonumber\\ 
&\overset{ \text{(\ref{additive2.1})}}{\Rightarrow}&
 \left(q^{\alpha(k-1)+1};q\right)_\infty\le\left(q^{\alpha{k}+1};q\right)_\infty;
\end{eqnarray}
\begin{eqnarray}\label{additive3.24}
q^{\alpha(k-1)+1}/q^{k+1}=q^{-k-\alpha+\alpha{k}}\geq1 &\Rightarrow& 1-q^{\alpha(k-1)+1} \leq  1-q^{k+1} \nonumber\\
&\overset{ \text{(\ref{additive2.1})}}{\Rightarrow}& \left(q^{\alpha(k-1)+1};q\right)_\infty \leq\left(q^{k+1};q\right)_\infty;
\end{eqnarray}
\begin{eqnarray}\label{additive3.25}
q^{\alpha}/q=q^{\alpha-1}\geq1 &\Rightarrow& 1-q^{\alpha} \leq  1-q \nonumber\\
&\overset{ \text{(\ref{additive2.1})}}{\Rightarrow}&  \frac{\left(q^{\alpha};q\right)_\infty}{\left(q;q\right)_\infty}\leq 1.
\end{eqnarray}

First, we consider the case $q^\alpha\leq1-q^\alpha<1$. Then $ \frac{q^\alpha}{1-q^\alpha} \leq1 $.  Form (\ref{additive2.1}), (\ref{additive3.21})  and (\ref{additive3.23})-(\ref{additive3.25}),  we have that 
that 
\begin{eqnarray}\label{additive3.26}
M_q(k)&\overset{ \text{(\ref{additive3.21})}}{=}&(1-q)^{\alpha{k}}\frac{\left(q^{\alpha+1};q\right)_\infty}{\left(q;q\right)_\infty}\frac{\left(q^{\alpha(k-1)+1};q\right)_\infty}{\left(q;q\right)_\infty} -  (1-q)^{\alpha{k}} \frac{\left(q^{\alpha{k}+1};q\right)_\infty}{\left(q;q\right)_\infty} \nonumber\\
&\overset{ \text{(\ref{additive3.23})}}{\leq}&(1-q)^{\alpha{k}}\left[ \frac{\left(q^{\alpha+1};q\right)_\infty}{\left(q;q\right)_\infty}\frac{\left(q^{\alpha(k-1)+1};q\right)_\infty}{\left(q;q\right)_\infty} -   \frac{\left(q^{\alpha{k-1}+1};q\right)_\infty}{\left(q;q\right)_\infty}\right]\nonumber\\
&\leq&(1-q)^{\alpha{k}}\frac{\left(q^{\alpha(k-1)+1};q\right)_\infty}{\left(q;q\right)_\infty}\left[\frac{1-q^\alpha}{1-q^\alpha}\frac{\left(q^{\alpha+1};q\right)_\infty}{\left(q;q\right)_\infty} - 1\right]\nonumber\\
&\overset{ \text{(\ref{additive2.1})(\ref{additive3.24})}}{=}&(1-q)^{\alpha{k}}\frac{\left(q^{k+1};q\right)_\infty}{\left(q;q\right)_\infty}\left[\frac{1}{1-q^\alpha}\frac{\left(q^{\alpha};q\right)_\infty}{\left(q;q\right)_\infty} - 1\right]\nonumber\\
&\overset{ \text{(\ref{additive3.25})}}{=}&(1-q)^{\alpha{k}}\frac{\left(q^{k+1};q\right)_\infty}{\left(q;q\right)_\infty}\left[\frac{1}{1-q^\alpha} - 1\right]\nonumber\\
&\leq&(1-q)^{\alpha{k}-k}\left[(1-q)^{(k+1)-1}\frac{\left(q^{k+1};q\right)_\infty}{\left(q;q\right)_\infty}\right]\left[\frac{1}{1-q^\alpha} - \frac{q^\alpha}{1-q^\alpha}\right]\nonumber\\
&\overset{ \text{(\ref{additive2.9})}}{=}& \frac{(1-q)^{\alpha{k}-k}}{\Gamma_q\left(k+1\right)}.  
\end{eqnarray}

Next, we the case $1-q^\alpha<q^\alpha$. In fact $1/2<q^\alpha$. Using (\ref{additive3.26})   we write that 
\begin{eqnarray*} 
M_q(k) 
&\leq&(1-q)^{\alpha{k}}\frac{\left(q^{k+1};q\right)_\infty}{\left(q;q\right)_\infty}\left[\frac{1}{1-q^\alpha} - 1\right]\nonumber\\
&\leq&(1-q)^{\alpha{k}}\frac{\left(q^{k+1};q\right)_\infty}{\left(q;q\right)_\infty}\left[\frac{1}{1-1/2} - 1\right]\nonumber\\
&=& \frac{(1-q)^{\alpha{k}-k}}{\Gamma_q\left(k+1\right)}.  
\end{eqnarray*}

Therefore,  
\begin{eqnarray}\label{additive3.27}
M_q(k)&\lesssim&  \frac{(1-q)^{\alpha{k}-k}}{\Gamma_q\left(k+1\right)}. 
\end{eqnarray}

According  (\ref{additive2.3}), (\ref{additive2.9}), (\ref{additive3.22})  and (\ref{additive3.27})  we find that 
\begin{eqnarray*} 
\frac{e_{\alpha,1}\left(-z;q\right)}{\left[1+ \Gamma^{-1}_q\left(\alpha+1\right)z\right]^{-1}}&=&e_{\alpha,1}\left(-z;q\right)+ \Gamma^{-1}_q\left(\alpha+1\right)ze_{\alpha,1}\left(-z;q\right)\nonumber\\
&\overset{ \text{(\ref{additive2.9})}}{=}& \sum\limits_{k=0}^\infty\frac{\left(-z\right)^k}{\Gamma_q\left(\alpha{k}+1\right)} 
+ \sum\limits_{k=0}^\infty\frac{\left(-z\right)^{k+1}\Gamma^{-1}_q\left(\alpha+1\right)}{\Gamma_q\left(\alpha{k}+1\right)} \nonumber\\
&=&1- \sum\limits_{k=1}^\infty\frac{\left(-z\right)^k}{\Gamma_q\left(\alpha{k}+1\right)} 
- \sum\limits_{k=1}^\infty\frac{\left(-z\right)^k\Gamma^{-1}_q\left(\alpha+1\right)}{\Gamma_q\left(\alpha(k-1)+1\right)} 
\end{eqnarray*}
\begin{eqnarray*}
&=&1- \sum\limits_{k=1}^\infty \left(-z\right)^k \left[\frac{\Gamma_q^{-1}\left(\alpha+1\right)}
{ \Gamma_q\left(\alpha(k-1)+1\right)}-\frac{1}{ \Gamma_q\left(\alpha{k}+1\right)}\right] \nonumber\\
&\overset{ \text{(\ref{additive3.21})}}{=}&1-\sum\limits_{k=1}^\infty(-z)^kM_q(k)\nonumber\\
&\overset{ \text{(\ref{additive3.22})(\ref{additive3.27})}}{\approx}&  1-\sum\limits_{k=0}^\infty\frac{\left(-\frac{z}{(1-q)^{1-\alpha}}\right)^k}{[k]_q!} \nonumber\\
&\overset{ \text{(\ref{additive2.3})}}{=}&  1-e_q\left(-\frac{z}{(1-q)^{1-\alpha}}\right)  \nonumber\\
&\leq&1.
\end{eqnarray*}

Hence,
\begin{eqnarray*} 
e_{\alpha,1}\left(-z;q\right)&\leq&\frac{1}{1+ \Gamma^{-1}_q\left(\alpha+1\right)z}\nonumber\\
&\leq&\frac{1}{1+\Gamma^{-1}_q\left(\alpha+1\right)z}\nonumber\\
&\leq&1,
\end{eqnarray*}
which implies  that also the
upper estimates in (\ref{additive3.1}) and (\ref{additive3.2}) are proved. This completes the proof. 
\end{proof}

\begin{lem}\label{lem3.2} Let $0<\alpha<2$,   $0\leq\beta$ and $z>0$.   Then we get 
\begin{eqnarray}\label{additive3.28}
\left|e_{\alpha,\beta}\left(-z;q\right)\right|\leq \frac{C_q}{1+z},
\end{eqnarray}
where $C_q:= \frac{e_{q^{1/2}}(\alpha')}{\Gamma_q(\beta)}$ and $\alpha':=\frac{2^\alpha}{\left(1-q^\beta\right)(1-q^{1/2})}$.
\end{lem}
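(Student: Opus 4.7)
The approach mirrors that of Lemma \ref{lem3.1}. I would first apply the standard reindexing trick to the product $(1+z)e_{\alpha,\beta}(-z;q)$: shifting $k\mapsto k+1$ in $z\cdot e_{\alpha,\beta}(-z;q)$ and recombining yields
\begin{equation*}
(1+z)e_{\alpha,\beta}(-z;q) = \frac{1}{\Gamma_q(\beta)} + \sum_{k=1}^\infty (-z)^k\left[\frac{1}{\Gamma_q(\alpha k+\beta)} - \frac{1}{\Gamma_q(\alpha(k-1)+\beta)}\right].
\end{equation*}
Writing $\Delta_k$ for the bracket, the sought estimate $(1+z)|e_{\alpha,\beta}(-z;q)|\le C_q$ reduces, via the triangle inequality, to establishing
$$\sum_{k=1}^\infty z^k|\Delta_k|\,\le\,\frac{e_{q^{1/2}}(\alpha')-1}{\Gamma_q(\beta)}.$$

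To estimate $|\Delta_k|$, I would rewrite $1/\Gamma_q$ in $q$-Pochhammer form via (\ref{additive2.9}):
$$\Delta_k = \frac{(1-q)^{\alpha(k-1)+\beta-1}}{(q;q)_\infty}\Big[(1-q)^\alpha(q^{\alpha k+\beta};q)_\infty - (q^{\alpha(k-1)+\beta};q)_\infty\Big],$$
and then bound the bracket in the style used for $N_q(k)$ and $M_q(k)$ in Lemma \ref{lem3.1}, namely by factoring off $(1-q^{\alpha(k-1)+\beta})$ and comparing ratios of infinite $q$-shifted factorials to constants close to $1$ through repeated monotonicity arguments analogous to (\ref{additive3.3})--(\ref{additive3.18}).

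Two elementary identities furnish the precise constant $\alpha'$. First, the algebraic split $(1-q)^\alpha = (1-q^{1/2})^\alpha(1+q^{1/2})^\alpha \le 2^\alpha (1-q^{1/2})^\alpha$ produces the $2^\alpha$ factor and the $(1-q^{1/2})$ scale; when iterated through the bound on $z^k|\Delta_k|$ it feeds a $(1-q^{1/2})^{\alpha k}$ into the estimate, which, combined with $(q^{1/2};q^{1/2})_k$, matches the $q^{1/2}$-factorial $[k]_{q^{1/2}}!$ appearing in (\ref{additive2.3}) with $q$ replaced by $q^{1/2}$. Second, $1-q^{\alpha(k-1)+\beta}\ge 1-q^\beta$ (valid for all $k\ge 1$ since $\alpha(k-1)+\beta\ge \beta$) yields the $(1-q^\beta)^{-1}$ factor. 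Putting these together produces a per-term bound from which the required sum can be recognised as a multiple of $e_{q^{1/2}}(\alpha')$, giving $C_q=e_{q^{1/2}}(\alpha')/\Gamma_q(\beta)$.

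The main obstacle, as in the preceding lemma, is the lengthy but essentially routine bookkeeping of the $q$-Pochhammer monotonicity bounds needed to extract \emph{exactly} the constant $\alpha'$ rather than some cruder expression. A subtle point is that the natural $q$-series manipulations produce $(1-q)$-scale terms, and only after applying the split $(1-q)=(1-q^{1/2})(1+q^{1/2})$ can one interpret the resulting majorant as a $q^{1/2}$-exponential rather than a $q$-exponential (which would be the more obvious candidate at first sight). The fact that $\alpha'$ remains finite for $\alpha$ close to $2$ must be reflected in the estimates by keeping the $2^\alpha$ factor in the numerator untouched by the iteration.
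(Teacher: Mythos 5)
Your plan follows the paper's proof essentially step for step: the same reindexing of $(1+z)e_{\alpha,\beta}(-z;q)$ into $\Gamma_q(\beta)^{-1}$ plus a tail, the same split $(1-q)^{\alpha}=(1-q^{1/2})^{\alpha}(1+q^{1/2})^{\alpha}\le 2^{\alpha}(1-q^{1/2})^{\alpha}$ producing $\alpha'$, the same lower bound $(q^{\beta};q^{2})_{k}\ge(1-q^{\beta})^{k}$, and the same identification of the resulting majorant with $e_{q^{1/2}}(\alpha')$ through $[k]_{q^{1/2}}!$. The only cosmetic difference is that the paper bounds the sum $\Gamma_q(\alpha k+\beta)^{-1}+\Gamma_q(\alpha(k-1)+\beta)^{-1}$, which dominates your $|\Delta_k|$, rather than the difference itself, so neither argument exploits any cancellation and the two routes coincide.
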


\begin{proof}  Let $0<\alpha<2$,   $0\leq\beta$ and $k\in\mathbb{N}$. From (\ref{additive2.1}), we get the following inequalities
\begin{eqnarray}\label{additive3.29}
q^{2{k}+\beta}\leq q^{\alpha{k}+\beta}\leq q^{\alpha{(k-1)}+\beta}  &\Rightarrow& 1-q^{\alpha{(k-1)}+\beta} \leq  1- q^{\alpha{k}+\beta}\leq 1-q^{2{k}+\beta} \nonumber\\ 
&\overset{ \text{(\ref{additive2.1})}}{\Rightarrow}&
\left(q^{\alpha{(k-1)}+\beta};q\right)_\infty\le\left(q^{\alpha{k}+\beta};q\right)_\infty \le\left(q^{2{k}+\beta} ;q\right)_\infty;
\nonumber\\ 
& \Rightarrow&(1-q)^{\alpha{k}}\leq(1-q)^{\alpha{(k-1)}},
\end{eqnarray} 
and 
\begin{eqnarray}\label{additive3.30}
q^{\beta+1}\leq q\leq q^{1/2}   &\overset{ \text{(\ref{additive2.1})}}{\Rightarrow}&  
\left(q^{1/2};q^{1/2}\right)_k\le\left(q^{\beta+1} ;q^2\right)_k;
\nonumber\\
 (q^\beta;q^2)_k&\overset{ \text{(\ref{additive2.1})}}{=}&
 (1-q^\beta)(1-q^{\beta+2})\dots(1-q^{\beta+k-2}) 
 \nonumber\\&\Rightarrow& (1-q^\beta)^k\leq(q^\beta;q^2)_k.  
\end{eqnarray}

Thus, using (\ref{additive2.2}), (\ref{additive3.29}) and  (\ref{additive3.30}) we find that 
 
\begin{eqnarray}\label{additive3.31}
\gamma_{8,q}&:=&\frac{1}{\Gamma_q(\alpha{k}+\beta)} +  \frac{1}{\Gamma_q(\alpha{(k-1)}+\beta)}\nonumber\\&\overset{ \text{(\ref{additive2.9})}}{=}& 
  (1-q)^{\alpha{k}+\beta-1}\frac{\left(q^{\alpha{k}+\beta};q\right)_\infty}{\left(q;q\right)_\infty}  
  \nonumber\\&+&  (1-q)^{\alpha{(k-1)}+\beta-1}\frac{\left(q^{\alpha{(k-1)}+\beta};q\right)_\infty}{\left(q;q\right)_\infty} \nonumber\\
  &\overset{ \text{(\ref{additive3.29})}}{\leq}&2(1-q)^{\alpha{(k-1)}}\left[(1-q)^{\beta-1}
\frac{\left(q^\beta;q\right)_\infty}{\left(q;q\right)_\infty}  \right]  \frac{\left(q^{2{k}+\beta};q\right)_\infty}{\left(q^\beta;q\right)_\infty} \nonumber\\
  &\overset{ \text{(\ref{additive2.9})(\ref{additive2.2})}}{=}&\frac{2}{\Gamma(\beta)} (1-q)^{\alpha{(k-1)}}
  \frac{1}{\left(q^\beta;q\right)_{2k} } \nonumber\\
  &\overset{ \text{(\ref{additive2.2})}}{=}&\frac{2}{\Gamma(\beta)} 
 (1-q^{1/2})^{\alpha{(k-1)}} \frac{   (1+q^{1/2})^{\alpha{(k-1)}} }{\left(q^\beta;q^2\right)_k\left(q^{\beta+1};q^2\right)_k}\nonumber\\
  &\overset{ \text{(\ref{additive3.30})}}{=}&\frac{1}{\Gamma(\beta)  }   (1-q^{1/2})^{\alpha{(k-1)}}\frac{  2^{\alpha{k}} }{\left(1-q^\beta\right)^k\left(q^{1/2};q^{1/2}\right)_k}\nonumber\\
  &=&\frac{1}{\Gamma(\beta)}(1-q^{1/2})^{\alpha{(k-1)}}\frac{  \left[\frac{2^\alpha}{\left(1-q^\beta\right)(1-q^{1/2})}\right]^k }{ \left[ \frac{(q^{1/2};q^{1/2})_k}{(1-q^{1/2})^k}\right]}\nonumber\\
  &=&\frac{1}{\Gamma(\beta)}(1-q^{1/2})^{\alpha{(k-1)}}\frac{( {\alpha'})^k }
  { [k]_{q^{1/2}}!},
\end{eqnarray}
where $\alpha'=\frac{2^\alpha}{\left(1-q^\beta\right)(1-q^{1/2})}$.

Thus, form  (\ref{additive2.1}), (\ref{additive3.29}) and  (\ref{additive3.31}) we find that
\begin{eqnarray*} 
  \frac{ e_{\alpha,\beta}\left(-z;q\right) }{[1+z]^{-1}}  &\overset{ \text{(\ref{additive1.4})}}{=}&   \sum\limits_{m=0}^\infty \frac{(-z)^k}{\Gamma_q(\alpha{k}+\beta)} +z\sum\limits_{m=0}^\infty \frac{(-z)^k}{\Gamma_q(\alpha{k}+\beta)}  \nonumber\\&\leq&  \frac{1}{\Gamma_q(\beta)}+\sum\limits_{m=1}^\infty \frac{z^k}{\Gamma_q(\alpha{k}+\beta)} +\sum\limits_{m=1}^\infty \frac{z^k}{\Gamma_q(\alpha{(k-1)}+\beta)}\nonumber\\&=& \frac{1}{\Gamma_q(\beta)}+\sum\limits_{m=1}^\infty z^k\left[\frac{1}{\Gamma_q(\alpha{k}+\beta)} +  \frac{1}{\Gamma_q(\alpha{(k-1)}+\beta)} \right]  \nonumber\\&\overset{ \text{(\ref{additive3.31})}}{=}& 
  \frac{1}{\Gamma_q(\beta)}+\sum\limits_{m=1}^\infty z^k\gamma_{8,q}\nonumber\\
 &\overset{ \text{(\ref{additive3.31})}}{\leq}& 
  \frac{1}{\Gamma_q(\beta)}+\frac{1}{\Gamma(\beta) }\sum\limits_{m=1}^\infty    \left[(1-q^{1/2})^{\frac{\alpha{(k-1)}}{k}}z\right]^k\frac{   (\alpha')^k}{ [k]_{q^{1/2}}!}
  \end{eqnarray*}
\begin{eqnarray*}
 &\leq& 
  \frac{1}{\Gamma_q(\beta)}+\frac{1}{\Gamma(\beta) }\sum\limits_{m=1}^\infty  \frac{  (\alpha')^k }  { [k]_{q^{1/2}}!}\nonumber\\
 &\leq& 
  \frac{1}{\Gamma_q(\beta)}\left[1+ \sum\limits_{m=1}^\infty  \frac{  (\alpha')^k }  { [k]_{q^{1/2}}!}\right]
  \nonumber\\
 &\leq& 
  \frac{e_{q^{1/2}}(\alpha')}{\Gamma_q(\beta)}. 
\end{eqnarray*}
Therefore, 
\begin{eqnarray*} 
\left|e_{\alpha,\beta}\left(-z;q\right)\right|&\leq& \frac{e_{q^{1/2}}(\alpha')}{\Gamma_q(\beta)}  \frac{1}{1+z},
\end{eqnarray*}
which  means that the estimate (\ref{additive3.28}) holds.  The proof is complete. 
\end{proof}
 
%%%%%%%%%%%%%%%%%%%%%%%%%%%%%%%%%%%%%%% 
\section{ Direct problem }\label{sc4} 
%%%%%%%%%%%%%%%%%%%%%%%%%%%%%%%%%%%%%%% 

In this section we consider the    Cauchy problem for the time-fractional subdiffusion  in the  quantum calculus. We study existence and uniqueness results for this problem, based on the $\mathcal{L}$–Fourier method. An introduction and some basic definitions of the $\mathcal{L}$-Fourier analysis are given in  \cite{KRT2017}, \cite{RT2018}, \cite{RTT19} and references  therein. We briefly describe  the definitions used in this paper, the global Fourier analysis that has been developed in \cite{RT2016}.  Let  $H$ be a separable Hilbert space and   $\mathcal{L}$ be a positive  operator with the corresponding discrete spectrum  $\{\lambda_k\}_{k\in{I}}$ on $H$, so that  $\lambda_k\in\mathbb{R}$: $\lambda_k\geq 0$ for all $k\in{I}$,  where   $I$ is a countable set $(I=\mathbb{N}^k$ or $I=\mathbb{Z}^k$ for some $k$).   The Plancherel identity on $H$ takes the form
\begin{eqnarray}\label{additive4.1}
 \|u\|_{H}=\left(\sum\limits_{k\in{I}} \left|\langle u,\psi_k\rangle_H\right|^2 \right)^\frac{1}{2},
 \end{eqnarray}
for $u\in{H}$,  where $\psi_k$ is an orthonormal basis of $H$ corresponding to $\lambda_k$.

 Consequently, we can also define the Sobolev spaces $\mathcal{H}^d_{\mathcal{L}}$   associated to $\mathcal{L}$ as  
\begin{eqnarray*}
\mathcal{H}^d_{\mathcal{L}}:=\left\{u\in{H}:  \left(I+\mathcal{L}\right)^{d/2}u\in{H}\right\}
\end{eqnarray*}
for any $d\in\mathbb{R}$. Using Plancherel’s identity, we can write the norm in the following form: 
\begin{eqnarray}\label{additive4.2}
\|u\|_{\mathcal{H}^{d}_{\mathcal{L}}}:=\|\left(I+\mathcal{L}\right)^{d/2}u\|_H=\left(\sum\limits_{k\in{I}}\left(1+\lambda_k\right)^d\left|\langle{u},\psi_k\rangle_H\right|^2\right)^\frac{1}{2}.
\end{eqnarray}

For $u:\in[0,T]\rightarrow\mathcal{H}^d_{\mathcal{L}}$ we introduce the spaces   $W_q^\alpha\left( [0, T]; \mathcal{H}^d_{\mathcal{L}}\right)$,  
$C_q^m\left( [0, T]; \mathcal{H}^d_{\mathcal{L}}\right)$  and $L_q^\infty[0, T]$ as
\begin{eqnarray*}
\|u\|_{W_q^\alpha\left( [0, T];\mathcal{H}^d_{\mathcal{L}}\right)}:= \max\limits_{0\leq t\leq T}\| u(t)\|_{\mathcal{H}^d_{\mathcal{L}}}+
\max\limits_{0\leq t\leq T}\| ^cD^\alpha_{q} u(t)\|_{\mathcal{H}^d_{\mathcal{L}}} 
\end{eqnarray*}
for $0<\alpha<2$, and 
\begin{eqnarray*}
\|u\|_{C_q^m\left( [0, T]; \mathcal{H}^d_{\mathcal{L}}\right)}:=\sum\limits_{i=0}^m\sup\limits_{0<t\leq T}\|D^i_qu(t)\|_{\mathcal{H}^d_{\mathcal{L}}}, 
\end{eqnarray*}
and 
\begin{eqnarray*}
\|u\|_{L_q^\infty\left([0, T];\mathcal{H}^d_{\mathcal{L}}\right)}=\sup\limits_{0\leq{t}\leq{T}}\|u(t)\|_{\mathcal{H}^d_{\mathcal{L}}}.
\end{eqnarray*}

%%%%%%%%%%%%%%%%%%%%%%%%%%%%%%%%%%%%%%%%%%%%%%
 \subsection{The $0 < \alpha \leq 1$ case}
%%%%%%%%%%%%%%%%%%%%%%%%%%%%%%%%%%%%%%%%%%%%%%

The first purpose of this section is to study the direct time-fractional subdiffusion problem for the   equation 
\begin{eqnarray}\label{additive4.3}
^cD^\alpha_{q} u(t)+\mathcal{L}u(t)+mu(t)=f(t) \in H, \;\;t>0,
\end{eqnarray}
 with the initial data
\begin{eqnarray}\label{additive4.4}
u(0) = \varphi\in H,
\end{eqnarray}
where   $^cD^\alpha_{0+,t} u(t)$ is the Caputo fractional $q$-derivatives of order $\alpha$ of $u(t)$ with respect to $t$ (see  (\ref{additive2.10}) and $m>0$.

A generalised solution of Problem (\ref{additive4.3})-(\ref{additive4.4}) is a function 
\begin{eqnarray*}
u\in L_q^\infty\left([0, T]; \mathcal{H}^{d+2}_{\mathcal{L}} \right) {\cap}W_q^\alpha\left([0, T]; \mathcal{H}^d_{\mathcal{L}}\right).
\end{eqnarray*}

For this  considered Problem, the following theorem holds true.

\begin{thm}\label{thm4.1} Let $0<\alpha\leq1$, $0<T<\infty$, $\varphi\in \mathcal{H}^{d+2}_{\mathcal{L}}$ and
$f\in C_q^1\left([0, T]; \mathcal{H}^d_{\mathcal{L}}\right)$. Then there exists a unique solution $u(t)$ of Problem (\ref{additive4.3})-(\ref{additive4.4})  such that \begin{eqnarray*}
u\in L_q^\infty\left([0, T]; \mathcal{H}^{d+2}_{\mathcal{L}} \right) {\cap}W_q^\alpha\left([0, T]; \mathcal{H}^d_{\mathcal{L}}\right). 
\end{eqnarray*}

Moreover, this solution can be written in the form

\begin{eqnarray*} 
u(t)&=&\sum\limits_{k\in{I}}\left[\varphi_k e_{\alpha,1}\left(-\left(\lambda_k+m\right)t^\alpha;q\right)\right]\psi_k\nonumber\\
&+&\sum\limits_{k\in{I}}\left[\int\limits_0^tt^{\alpha-1}\left(qs/t;q\right)_{\alpha-1} \varepsilon^{-q^{\alpha}s}e_{\alpha,\alpha}\left(-\left(\lambda_k+m\right)t^\alpha;q\right) f_k(s)d_qs\right]\psi_k,
\end{eqnarray*}
which satisfies the estimate
 \begin{eqnarray*}
\|^cD^\alpha_{q} u(t)\|^2_{\mathcal{H}^d_{\mathcal{L}}}+\|u(t)\|^2_{\mathcal{H}^{d+2}_{\mathcal{L}}}\leq C_T\left[\|\varphi\|^2_{\mathcal{H}^{d+2}_{\mathcal{L}}} + \|f\|^2_{C_q^1\left([0, T]; \mathcal{H}^d_{\mathcal{L}}\right)}\right], \;\;\;0<t\leq T, 
\end{eqnarray*}
where the $q$-translation operator $\varepsilon^{-q^{\alpha}s}$ is defined in (\ref{additive2.6}) and $C_T:=\max\{2, T\}$.
\end{thm}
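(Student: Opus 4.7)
The plan is to apply the $\mathcal{L}$-Fourier method. I expand every object in the orthonormal basis $\{\psi_k\}_{k\in I}$ of eigenfunctions of $\mathcal{L}$, writing $u(t)=\sum_{k\in I}u_k(t)\psi_k$, $\varphi=\sum_{k\in I}\varphi_k\psi_k$ and $f(t)=\sum_{k\in I}f_k(t)\psi_k$ with $u_k(t)=\langle u(t),\psi_k\rangle_H$, and likewise for $\varphi_k,f_k$. Substituting into (\ref{additive4.3})--(\ref{additive4.4}) and using $\mathcal{L}\psi_k=\lambda_k\psi_k$ together with the orthonormality of $\{\psi_k\}$ decouples the problem into the countable family of scalar $q$-fractional Cauchy problems
\[
{}^cD_q^\alpha u_k(t) + (\lambda_k+m)u_k(t) = f_k(t), \qquad u_k(0)=\varphi_k, \qquad k\in I.
\]
The explicit $q$-Duhamel formula for such a scalar problem, available from Mansour's theory \cite{Mansour2009}, expresses $u_k(t)$ precisely as the quantity appearing inside the brackets in the statement of Theorem~\ref{thm4.1}. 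The series candidate for $u(t)$ is therefore forced, and it remains to prove convergence in the stated norms.

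The core of the proof is the $k$-uniform estimate of each summand of $u_k(t)$ weighted by $(1+\lambda_k)^{(d+2)/2}$, followed by summation via Plancherel's identity (\ref{additive4.1})--(\ref{additive4.2}). For the homogeneous contribution Lemma~\ref{lem3.1} yields $|e_{\alpha,1}(-(\lambda_k+m)t^\alpha;q)|\le 1$, so its weighted $\ell^2$-sum is bounded by $\|\varphi\|_{\mathcal{H}^{d+2}_\mathcal{L}}^2$. The Duhamel term is more delicate: a direct application of Lemma~\ref{lem3.2} to $e_{\alpha,\alpha}$ would only produce a factor of the form $(1+\lambda_k)(1+(\lambda_k+m)t^\alpha)^{-1}$, which fails to be uniformly bounded as $t\to 0^+$. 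The remedy is to perform one $q$-integration by parts (\ref{additive2.8}) in the inner integral, exploiting a $q$-analogue of the classical identity $\frac{d}{ds}E_{\alpha,1}(-\lambda(t-s)^\alpha) = \lambda(t-s)^{\alpha-1}E_{\alpha,\alpha}(-\lambda(t-s)^\alpha)$. Concretely, combining (\ref{additive2.5}) with the series expansion (\ref{additive2.7}) should produce a formula of the shape
\[
D_{q,s}\bigl[(qs/t;q)_\alpha\,\varepsilon^{-q^\alpha s}\,e_{\alpha,1}(-(\lambda_k+m)t^\alpha;q)\bigr] = (\lambda_k+m)\,t^{\alpha-1}(qs/t;q)_{\alpha-1}\,\varepsilon^{-q^\alpha s}\,e_{\alpha,\alpha}(-(\lambda_k+m)t^\alpha;q),
\]
so that (\ref{additive2.8}) trades the $e_{\alpha,\alpha}$ kernel for $e_{\alpha,1}$, shifts one $D_q$ onto $f_k$, and extracts the crucial factor $(\lambda_k+m)^{-1}$. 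Using Lemma~\ref{lem3.1} and $m>0$ one then obtains $(1+\lambda_k)(\lambda_k+m)^{-1}\le \max\{1,1/m\}$ uniformly in $k$, and the final summation via Plancherel delivers
\[
\|u(t)\|_{\mathcal{H}^{d+2}_\mathcal{L}}^2 \le C_T\bigl(\|\varphi\|_{\mathcal{H}^{d+2}_\mathcal{L}}^2 + \|f\|_{C_q^1([0,T];\mathcal{H}^d_\mathcal{L})}^2\bigr),
\]
with $C_T=\max\{2,T\}$. The hypothesis $f\in C_q^1$ enters precisely at this integration by parts, since the remainder term involves $D_q f$.

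Once $u\in L_q^\infty([0,T];\mathcal{H}^{d+2}_\mathcal{L})$ is established, the estimate on $\|{}^cD_q^\alpha u(t)\|_{\mathcal{H}^d_\mathcal{L}}$ follows immediately from the equation itself, rewritten as ${}^cD_q^\alpha u = f - \mathcal{L}u - mu$, combined with the triangle inequality and the trivial bound $\|u\|_{\mathcal{H}^d_\mathcal{L}}\le\|u\|_{\mathcal{H}^{d+2}_\mathcal{L}}$. Uniqueness is a consequence of linearity: the difference of two solutions with identical data obeys the homogeneous scalar $q$-Cauchy problem in every Fourier coordinate, which admits only the zero solution, so Plancherel forces $u\equiv 0$.

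The principal technical obstacle is the $q$-differentiation identity underpinning the integration by parts displayed above. Verifying it requires combining the product rule for $D_q$ applied to the $q$-shifted factorial $(qs/t;q)_\alpha$ via (\ref{additive2.5}) with the series expansion (\ref{additive2.7}) of the $q$-translation $\varepsilon^{-q^\alpha s}$, then differentiating termwise and resumming; the convergence range $|z(1-q)^\alpha|<1$ built into the definition (\ref{additive1.4}) of $e_{\alpha,\beta}$ legitimizes the interchange of $D_q$ with the series. Without this identity the $k$-uniform bound cannot be achieved and the series for $u(t)$ does not converge in $\mathcal{H}^{d+2}_\mathcal{L}$, so its careful justification is the main technical payload of the proof.
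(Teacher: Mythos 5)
Your proposal follows essentially the same route as the paper: eigenfunction expansion reducing to scalar $q$-fractional Cauchy problems solved by the $q$-Duhamel formula, then a $q$-integration by parts based on the identity $D_{q,s}\bigl[\varepsilon^{-q^{\alpha}s}e_{\alpha,1}(-\lambda_0 t^\alpha;q)\bigr]=\lambda_0\,t^{\alpha-1}(qs/t;q)_{\alpha-1}\,\varepsilon^{-q^{\alpha}s}e_{\alpha,\alpha}(-\lambda_0 t^\alpha;q)$ (the paper's (\ref{additive4.13}), which is your displayed identity up to the superfluous factor $(qs/t;q)_\alpha$ inside the derivative) to extract $(\lambda_k+m)^{-1}$, followed by the Mittag--Leffler bounds of Section~\ref{sc3}, Plancherel summation, and the equation itself for the $^cD^\alpha_q u$ estimate. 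This matches the paper's proof in both structure and the key technical step, so no further comparison is needed.
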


\begin{proof} {\it Existence}.  Since the system of eigenfunctions $\psi_k$ is a basis in $H$, we seek a generalised solution by $u(t)$ in the form
\begin{eqnarray}\label{additive4.5}
u(t)=\sum\limits_{k\in{I}}u_k(t)\psi_k,
\end{eqnarray}
and we represent the function  on the right-hand side of the equation (\ref{additive4.2}) in  the form: 
\begin{eqnarray}\label{additive4.6}
f(t)=\sum\limits_{k\in{I}}f_k(t)\psi_k, 
\end{eqnarray}
where the Fourier coefficients are defined by the formula
\begin{eqnarray*} 
u_k(t):=\langle u(t), \psi_k\rangle_H,\;\;\;\;f_k(t):=\langle f(t), \psi_k\rangle_H.
\end{eqnarray*}

Moreover,  we find the expression for the derivatives of (\ref{additive4.5}) -(\ref{additive4.6}):
\begin{eqnarray}\label{additive4.7}
^cD^\alpha_{q} u(t)=\sum\limits_{k\in{I}} {^cD}^\alpha_{q}u_k(t)\psi_k,
\end{eqnarray}
\begin{eqnarray}\label{additive4.8}
\mathcal{L}u(t)=\sum\limits_{k\in{I}}\lambda_ku_k(t)\psi_k. 
\end{eqnarray}

 Then substituting (\ref{additive4.6}) -(\ref{additive4.8})  into (\ref{additive4.3}) and (\ref{additive4.4}), we get
\begin{eqnarray}\label{additive4.9}
 ^cD^\alpha_{q,t} u_k(t)+\left(\lambda_k+m\right)u_k(t)=f_k(t), 
\end{eqnarray}
\begin{eqnarray}\label{additive4.10}
 u_k(0)= \varphi_k,
 \end{eqnarray}
for $k\in{I}$ and $t>0$.

A generalised solution of the problem  (\ref{additive4.8}) - (\ref{additive4.10})  (\cite[
Theorem 3.1]{STT2022}) 
  is given by
\begin{eqnarray}\label{additive4.11}
 u_k(t)&=& \varphi_k e_{\alpha,1}\left(-\left(\lambda_k+m\right)t^\alpha;q\right)\nonumber\\
&+&\int\limits_0^tt^{\alpha-1}\left(qs/t;q\right)_{\alpha-1} \varepsilon^{-q^{\alpha}s}e_{\alpha,\alpha}\left(-\left(\lambda_k+m\right)t^\alpha;q\right) f_k(s)d_qs. 
\end{eqnarray}

This, combined with (\ref{additive4.5})  and (\ref{additive4.11})  gives
\begin{eqnarray*}
u(t)&=&\sum\limits_{k\in{I}}\left[\varphi_k e_{\alpha,1}\left(-\left(\lambda_k+m\right)t^\alpha;q\right)\right]\psi_k\nonumber\\
&+&\sum\limits_{k\in{I}}\left[\int\limits_0^tt^{\alpha-1}\left(qs/t;q\right)_{\alpha-1} \varepsilon^{-q^{\alpha}s}e_{\alpha,\alpha}\left(-\left(\lambda_k+m\right)t^\alpha;q\right) f_k(s)d_qs\right]\psi_k.
\end{eqnarray*}

{\it Convergence}. We assume that     $i=0,1,2\dots,$ and $0<\alpha\leq1$. Then by following (see \cite[Proposition 14.1]{ChKa2000}) we  have   
\begin{eqnarray}\label{additive4.12}
  t^{\alpha{i}-1} (q^{\alpha}s/t;q)_{\alpha i-1}= t^{\alpha(i-1)}(q^\alpha s;q)_{\alpha(i-1)} \left[t^{\alpha-1}(qs/t;q)_{\alpha-1}\right].  
\end{eqnarray}

For a fixed $k$, let us denote  $\lambda_0:=\lambda_k+m$ for $m>0$. Then by making use of   (\ref{additive2.5}), 
 (\ref{additive2.6}), (\ref{additive2.9})  and (\ref{additive4.12}),    we have 

\begin{eqnarray*} 
D_{q,s}\left[\varepsilon^{-q^{\alpha}s}e_{\alpha,1}\left(-\lambda_0t^\alpha;q\right)\right]&=&D_{q,s}\left[e_{\alpha,1}\left(-\lambda_0t^\alpha( q^{\alpha}s/t;q)_\alpha;q\right)\right]\nonumber\\
&\overset{ \text{(\ref{additive2.6})}}{=}& 
\sum\limits_{i=1}^\infty \frac{\left(-\lambda_0\right)^i }{\Gamma_q(\alpha{i}+1)}D_{q,s}\left[t^{\alpha{i}}(q^{\alpha}s/t;q)_{\alpha i}\right]\nonumber\\
&\overset{ \text{(\ref{additive2.5})}}{=}&-\sum\limits_{i=1}^\infty \left(-\lambda_0\right)^i \frac{[\alpha{i}]_q}{\Gamma_q(\alpha{i}+1)}t^{\alpha{i}-1} (q^{\alpha}s/t;q)_{\alpha i-1} \nonumber\\
&\overset{ \text{(\ref{additive2.9})}}{=}&-\sum\limits_{i=1}^\infty  \frac{\left(-\lambda_0\right)^i}{\Gamma_q(\alpha{i})}t^{\alpha{i}-1} (q^{\alpha}s/t;q)_{\alpha i-1} \nonumber\\
&\overset{ \text{(\ref{additive4.12})}}{=}&\lambda_0\sum\limits_{i=1}^\infty \frac{\left(-\lambda_0\right)^{i-1}}{\Gamma(\alpha i)} \left[\frac{t^{\alpha(i-1)}(q^\alpha s;q)_{\alpha(i-1)}}{\left[t^{\alpha-1}(qs/t;q)_{\alpha-1}\right]^{-1}}\right]\nonumber\\
&=&\lambda_0\frac{\sum\limits_{i=0}^\infty \frac{\left(-\lambda_k\right)^i t^{\alpha{i}}(q^\alpha{s}/t;q)_{\alpha{i}}}{\Gamma(\alpha i+\alpha)}}{\left[t^{\alpha-1}(qs/t;q)_{\alpha-1}\right]^{-1}} \nonumber\\
&=&\lambda_0\frac{\varepsilon^{-q^{\alpha}s}e_{\alpha,\alpha}\left(-\lambda_0t^\alpha;q\right)}{\left[t^{\alpha-1}(qs/t;q)_{\alpha-1}\right]^{-1}}.
\end{eqnarray*}

Thus,  
\begin{eqnarray}\label{additive4.13}
\frac{\varepsilon^{-q^{\alpha}s}e_{\alpha,\alpha}\left(-\lambda_0t^\alpha;q\right)}{\left[t^{\alpha-1}(qs/t;q)_{\alpha-1}\right]^{-1}}=\frac{D_{q,s}\left[\varepsilon^{-q^{\alpha}s}e_{\alpha,1}\left(-\lambda_0t^\alpha;q\right)\right]}{\lambda_0}. 
\end{eqnarray}

Form (\ref{additive2.8}) and (\ref{additive4.13})  we get that 
\begin{eqnarray}\label{additive4.14}
\int\limits_0^t \frac{\varepsilon^{-q^{\alpha}s}e_{\alpha,\alpha}\left(-\lambda_0t^\alpha;q\right)
 f_k(s)}{\left[t^{\alpha-1}\left(qs/t;q\right)^{\alpha-1}\right]^{-1}}d_qs
&\overset{ \text{(\ref{additive4.13})}}{=}&\int\limits_0^t\frac{D_{q,s}\left[\varepsilon^{-q^{\alpha}s}e_{\alpha,1}\left(-\lambda_0 t^\alpha ;q\right)\right] f_k(s)d_qs}{\lambda_0}\nonumber\\
&\overset{ \text{(\ref{additive2.8})}}{=}&\frac{f_k(t)\varepsilon^{-q^{\alpha}t}e_{\alpha,1}\left(-\lambda_0 t^\alpha ;q\right)}{\lambda_0}
-\frac{f_k(0)e_{\alpha,1}\left(-\lambda_0t^\alpha;q\right)}{\lambda_0}\nonumber\\
&-&\int\limits_0^t\frac{\varepsilon^{-q^{\alpha}s}e_{\alpha,1}\left(-\lambda_0t^\alpha;q\right)D_{q}f_k(s)d_qs}{\lambda_0}. 
\end{eqnarray}

Since $\varphi\in  \mathcal{H}^{d+2}_{\mathcal{L}}$, $f\in C_q^1\left([0, T]; \mathcal{H}^d_{\mathcal{L}}\right)$, then from (\ref{additive3.28}), (\ref{additive4.11}) and (\ref{additive4.14})   it follows that
\begin{eqnarray*} 
\left|\langle u(t), \psi_k\rangle_H\right|&\overset{ \text{(\ref{additive4.11})}}{\leq}& \left| \langle \varphi(t), \psi_k\rangle_H\right| \left|e_{\alpha,1}\left(-\lambda_0t^\alpha;q\right)\right|\nonumber\\
&+&\left|\int\limits_0^tt^{\alpha-1}\left(qs/t;q\right)_{\alpha-1} \varepsilon^{-q^{\alpha}s}e_{\alpha,\alpha}\left(-\lambda_0t^\alpha;q\right) \langle f(t), \psi_k\rangle_Hd_qs\right|\nonumber\\
&\overset{ \text{(\ref{additive4.14})}}{\leq}&\left|\langle \varphi(t), \psi_k\rangle_H\right|\left|e_{\alpha,1}\left(-\lambda_0t^\alpha;q\right)\right|+
\frac{\left|\langle f(t), \psi_k\rangle_H\right|\left|\varepsilon^{-q^{\alpha}t}e_{\alpha,1}\left(-\lambda_0 t^\alpha ;q\right)\right|}{\lambda_0}\nonumber\\
&+&\frac{\left| e_{\alpha,1}\left(-\lambda_0t^\alpha;q\right)\right|\left|\langle f(0), \psi_k\rangle_H\right|}{\lambda_0}
\end{eqnarray*}
\begin{eqnarray*} 
&+&\frac{ \int\limits_0^t\left|\varepsilon^{-q^{\alpha}s}e_{\alpha,1}\left(-\lambda_0t^\alpha;q\right)\langle D_{q}f(t), \psi_k\rangle_H\right|d_qs }{\lambda_0}\nonumber\\
&\overset{ \text{(\ref{additive3.28})}}{\lesssim}&\left|\langle \varphi(t), \psi_k\rangle_H\right|+ \frac{\left|\langle f(t), \psi_k\rangle_H\right|}{\lambda_0} \nonumber\\
&+&\frac{\left|\langle f(0), \psi_k\rangle_H\right|}{\lambda_0}
+\frac{ 1}{\lambda_0}\int\limits_0^t \left|D_{q}\langle f(t), \psi_k\rangle_H\right|d_qs\nonumber\\
&\leq&\left|\langle\varphi, \psi_k\rangle_H\right| +\frac{2}{\lambda_0}\sup\limits_{0<t\leq{T}}\left|\langle f(t), \psi_k\rangle_H\right|
+\frac{ T}{\lambda_0}\sup\limits_{0<t\leq{T}}\left|\langle D_{q}f(t), \psi_k\rangle_H\right|,
\end{eqnarray*}
which implies 
\begin{eqnarray}\label{additive4.15}
\left(1+\lambda_k\right)^{d/2}\left|\langle u(t), \psi_k\rangle_H\right|&\leq& C_T\left[\left|\langle\left(1+\lambda_k\right)^{d/2}\varphi, \psi_k\rangle_H\right|\right.\nonumber\\
&+& \sup\limits_{0<t\leq{T}}\left|\langle \left(1+\lambda_k\right)^{d/2-1}f(t), \psi_k\rangle_H\right|\nonumber\\
&+& \left.\sup\limits_{0<t\leq{T}}\left|\langle \left(1+\lambda_k\right)^{d/2-1}D_{q}f(t), \psi_k\rangle_H\right|\right], 
\end{eqnarray}
where $C_T:=\max\{2, T\}$.

By the definition of the Fourier coefficient  and   (\ref{additive4.9}) and (\ref{additive4.15}), we find that    
\begin{eqnarray}\label{additive4.16}
\left(1+\lambda_k\right)^{d/2}\left|\langle {^cD^\alpha_{q}}u(t), \psi_k\rangle_H\right|&=&\left(1+\lambda_k\right)^{d/2}|^cD^\alpha_{q} u_k(t)|\nonumber\\
&\overset{ \text{(\ref{additive4.9})}}{\lesssim}&\left(1+\lambda_k\right)^{d/2+1}\left|u_k(t)\right|+\left(1+\lambda_k\right)^{d/2}\left|f_k(t)\right|\nonumber\\
&\overset{ \text{(\ref{additive4.15})}}{\leq}& C_T\left[\left|\langle\left(1+\lambda_k\right)^{d/2+1}\varphi, \psi_k\rangle_H\right|\right.\nonumber\\
&+& \sup\limits_{0<t\leq{T}}\left|\langle \left(1+\lambda_k\right)^{d/2}f(t), \psi_k\rangle_H\right| \nonumber\\
&+& \left.\sup\limits_{0<t\leq{T}}\left|\langle \left(1+\lambda_k\right)^{d/2}D_{q}f(t), \psi_k\rangle_H\right|\right]\nonumber\\
&=&C_T\left[\left|\langle\left(I+\mathcal{L}\right)^{d/2+1}\varphi, \psi_k\rangle_H\right|\right.\nonumber\\
&+& \sup\limits_{0<t\leq{T}}\left|\langle \left(I+\mathcal{L}\right)^{d/2}f(t), \psi_k\rangle_H\right| \nonumber\\
&+& \left.\sup\limits_{0<t\leq{T}}\left|\langle\left(I+\mathcal{L}\right)^{d/2}D_{q}f(t), \psi_k\rangle_H\right|\right], 
\end{eqnarray}
and 
\begin{eqnarray}\label{additive4.17}
\left(1+\lambda_k\right)^{d/2}\left|\langle\mathcal{L}u(t), \psi_k\rangle_H\right|&=&\left(1+\lambda_k\right)^{d/2}\left|\langle\lambda_ku(t), \psi_k\rangle_H\right|\nonumber\\
&\overset{ \text{(\ref{additive4.15})}}{\leq}&C_T\left[\left|\langle\left(1+\lambda_k\right)^{d/2+1}\varphi, \psi_k\rangle_H\right|\right.\nonumber\\
&+& \sup\limits_{0<t\leq{T}}\left|\langle \left(1+\lambda_k\right)^{d/2}f(t), \psi_k\rangle_H\right|\nonumber\\
&+&\left.\sup\limits_{0<t\leq{T}}\left|\langle \left(1+\lambda_k\right)^{d/2}D_{q}f(t), \psi_k\rangle_H\right|\right]\nonumber\\
&=&C_T\left[\left|\langle\left(I+\mathcal{L}\right)^{d/2+1}\varphi, \psi_k\rangle_H\right| \right.\nonumber\\
&+& \sup\limits_{0<t\leq{T}}\left|\langle \left(I+\mathcal{L}\right)^{d/2}f(t), \psi_k\rangle_H\right| \nonumber\\
&+& \left.\sup\limits_{0<t\leq{T}}\left|\langle \left(I+\mathcal{L}\right)^{d/2}D_{q}f(t), \psi_k\rangle_H\right|\right].
\end{eqnarray}

Using (\ref{additive4.2}),  (\ref{additive4.16}), (\ref{additive4.17}),  and the Parseval  identity, we get that
\begin{eqnarray*}
\|^cD^\alpha_{q} u(t)\|^2_{\mathcal{H}^d_{\mathcal{L}}}&\overset{ \text{(\ref{additive4.2})}}{=}&\sum\limits_{k\in I} \left(1+\lambda_k\right)^{d}\left|\langle {^cD^\alpha_{q}}u(t), \psi_k\rangle_H\right|^2\nonumber\\
&\overset{ \text{(\ref{additive4.16})}}{\leq}& 
 C_T\left\{\sum\limits_{k\in I}\left|\langle\left(I+\mathcal{L}\right)^{d/2+1}\varphi, \psi_k\rangle_H\right|^2\right.\nonumber\\
 &+& \sum\limits_{k\in I}\left[\sup\limits_{0<t\leq{T}}\left|\langle \left(I+\mathcal{L}\right)^{d/2}f(t), \psi_k\rangle_H\right|^2\right. \nonumber\\
&+&\left. \left.\sup\limits_{0<t\leq{T}}\left|\langle \left(I+\mathcal{L}\right)^{d/2}D_{q}f(t), \psi_k\rangle_H\right|^2\right]\right\}\nonumber\\ 
&\overset{ \text{(\ref{additive4.2})}}{=}&C_T\left[\|\left(I+\mathcal{L}\right)^{d/2+1}\varphi\|^2_H+\sup\limits_{0<t\leq{T}}\|\left(I+\mathcal{L}\right)^{d/2}f(t)\|^2_H\right.\nonumber\\
&+&\left.\sup\limits_{0<t\leq{T}}\|\left(1+\mathcal{L}\right)^{d/2}D_{q}f(t)\|^2_H\right]\nonumber\\ 
&=&C_T\left[\| \varphi\|^2_{\mathcal{H}^{d+2}_{\mathcal{L}}} + \|f\|^2_{C_q^1\left([0, T]; \mathcal{H}^d_{\mathcal{L}}\right)}\right]<\infty,
\end{eqnarray*}
and 
\begin{eqnarray*}
\|\left(I+\mathcal{L}\right)u(t)\|^2_{\mathcal{H}^d_{\mathcal{L}}}&\overset{ \text{(\ref{additive4.2})}}{=}&\sum\limits_{k\in I}\left|\left(1+\lambda_k\right)^{d/2}\langle\left(I+\mathcal{L}\right)u(t), \psi_k\rangle_H\right|^2\\
&\overset{ \text{(\ref{additive4.17})}}{\leq}&C_T\left[\|\varphi\|^2_{\mathcal{H}^{d+2}_{\mathcal{L}}} + \|f\|^2_{C_q^1\left([0, T]; \mathcal{H}^d_{\mathcal{L}}\right)}\right]<\infty.
\end{eqnarray*}

Hence, the above estimates imply that
\begin{eqnarray*}
\|^cD^\alpha_{q} u(t)\|^2_{\mathcal{H}^d_{\mathcal{L}}}+\|u(t)\|^2_{\mathcal{H}^{d+2}_{\mathcal{L}}}\leq C_T\left[\|\varphi\|^2_{\mathcal{H}^{d+2}_{\mathcal{L}}} + \|f\|^2_{C_q^1\left([0, T]; \mathcal{H}^d_{\mathcal{L}}\right)}\right], 
\end{eqnarray*}
which means that
$ u\in L_q^\infty\left([0, T]; \mathcal{H}^{d+2}_{\mathcal{L}} \right) {\cap}W_q^\alpha\left([0, T]; \mathcal{H}^d_{\mathcal{L}}\right)$.

{\it The uniqueness}. We assume that there are two solutions $v_1(t)$ and $v_2(t)$  of    the problem (\ref{additive4.2})-(\ref{additive4.3}). Then we must have that, for $w=u-v$,
\begin{eqnarray*}
^cD^\alpha_{q}w+\mathcal{L}w&=& ^cD^\alpha_{q}\left(u-v\right)+\mathcal{L}\left(u-v\right)+mw\left(u-v\right)\\
&=&^cD^\alpha_{t}u+\mathcal{L}u-\left[^cD^\alpha_{q}v+\mathcal{L}v\right]\\
&=&f-f=0,
\end{eqnarray*}
with  the initial conditions
$$
w(0)=u(0)-v(0)=\varphi-\varphi=0.
$$

Therefore, $w=0$, and the solution must be unique. The proof is complete.
\end{proof}

\;\;\;\;\;

%%%%%%%%%%%%%%%%%%%%%%%%%%%
\subsection{The $1< \alpha < 2$ case}
%%%%%%%%%%%%%%%%%%%%%%%%%%
This subsection is concerned with a   Cauchy  problem for   the  time-fractional subdiffusion equation 
\begin{eqnarray}\label{additive4.18}
^cD^\alpha_{q} u(t)+\mathcal{L}u(t)+mu(t)=f(t) \in H,
\end{eqnarray}
with the initial data 
\begin{eqnarray}\label{additive4.19}
u(0) = \varphi\in{H},  \;\;\;\;   D_qu(0) = \rho\in{H}.    
\end{eqnarray}
 
\begin{thm}\label{thm4.2} Let  $1<\alpha<2$, $m>0$ and $0<T<\infty$. We assume that  $\varphi, \rho\in  \mathcal{H}^{d+2}_{\mathcal{L}}$ and  $f\in C_q^1\left([0, T]; \mathcal{H}^d_{\mathcal{L}}\right)$. Then there exists a unique solution of Problem (\ref{additive4.18})-(\ref{additive4.19}):
\begin{eqnarray*}
u\in L_q^\infty\left([0, T]; \mathcal{H}^{d+2}_{\mathcal{L}}\right){\cap}  W_q^\alpha\left([0, T]; \mathcal{H}^d_{\mathcal{L}}\right). 
\end{eqnarray*}

Moreover, this solution can be written in the form
\begin{eqnarray}\label{additive4.20}
 u(t) &=&\sum\limits_{k\in I}\left[\varphi_ke_{\alpha,1}\left(-\left(\lambda_k+m\right)t^\alpha;q\right)+t\rho_ke_{\alpha,2}\left(-\left(\lambda_k+m\right)t^\alpha;q\right)\right.\nonumber\\
  &+&\left.\int\limits_0^tt^{\alpha-1}\left(qs/t;q\right)_{\alpha-1}\varepsilon^{-q^{\alpha}s}e_{\alpha,\alpha}\left(-\left(\lambda_k+m\right)t^\alpha;q\right)f_k(s)d_qs\right]\phi_k,  
\end{eqnarray}
which satisfies the estimate
\begin{equation}\label{additive4.21}
\|^cD^\alpha_{q} u(t)\|^2_{\mathcal{H}^d_{\mathcal{L}}}+\|u(t)\|^2_{\mathcal{H}^{d+2}_{\mathcal{L}}}\leq  C_T\left[\|\varphi\|^2_{\mathcal{H}^{d+2}_{\mathcal{L}}}+\|\rho\|^2_{\mathcal{H}^{d+2}_{\mathcal{L}}} + \|f\|^2_{C_q^1\left([0, T]; \mathcal{H}^d_{\mathcal{L}}\right)} \right],  
\end{equation}
for $0\leq{t}\leq{T}$.
\end{thm}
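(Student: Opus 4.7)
The plan is to follow the $\mathcal{L}$-Fourier method already deployed in the proof of Theorem \ref{thm4.1}, adapting it to accommodate the second initial condition $D_q u(0) = \rho$. Expanding $u(t) = \sum_{k \in I} u_k(t)\psi_k$ and $f(t) = \sum_{k \in I} f_k(t)\psi_k$ in the eigenbasis of $\mathcal{L}$ reduces the Cauchy problem (\ref{additive4.18})--(\ref{additive4.19}) to the scalar $q$-fractional equation
\begin{equation*}
{^cD}_q^\alpha u_k(t) + (\lambda_k + m)u_k(t) = f_k(t), \quad u_k(0) = \varphi_k, \quad D_q u_k(0) = \rho_k,
\end{equation*}
for each $k \in I$. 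For $1 < \alpha < 2$ the variation-of-parameters representation of the solution (analogous to Theorem 3.1 of STT2022 quoted in the proof of Theorem \ref{thm4.1}) contains three pieces: $\varphi_k e_{\alpha,1}(-(\lambda_k+m)t^\alpha;q)$ coming from $\varphi$, $t\rho_k e_{\alpha,2}(-(\lambda_k+m)t^\alpha;q)$ coming from the new initial condition $\rho$, and the $q$-convolution integral of $f_k$ against the kernel $t^{\alpha-1}(qs/t;q)_{\alpha-1}\varepsilon^{-q^\alpha s}e_{\alpha,\alpha}(-(\lambda_k+m)t^\alpha;q)$, yielding exactly formula (\ref{additive4.20}).

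Next, I would establish convergence of the series in the norms of $L_q^\infty([0,T];\mathcal{H}^{d+2}_{\mathcal{L}})$ and $W_q^\alpha([0,T];\mathcal{H}^d_{\mathcal{L}})$ by estimating each of the three contributions to $|u_k(t)|$ and then summing via Parseval's identity (\ref{additive4.1})--(\ref{additive4.2}). Writing $\lambda_0 = \lambda_k + m$ as before, Lemma \ref{lem3.2} with $\beta = 1, 2$ (both satisfy $0 \leq \beta$ and $0 < \alpha < 2$) gives
\begin{equation*}
|e_{\alpha,1}(-\lambda_0 t^\alpha;q)| \lesssim 1, \qquad |t\, e_{\alpha,2}(-\lambda_0 t^\alpha;q)| \leq \frac{C_q\, t}{1 + \lambda_0 t^\alpha} \leq C_q T,
\end{equation*}
so the $\varphi_k$ and $\rho_k$ terms are controlled without any loss of regularity, yielding the $\|\varphi\|^2_{\mathcal{H}^{d+2}_{\mathcal{L}}} + \|\rho\|^2_{\mathcal{H}^{d+2}_{\mathcal{L}}}$ contributions. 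For the forcing term the $q$-integration by parts identity (\ref{additive4.13})--(\ref{additive4.14}) is reused verbatim, trading the $e_{\alpha,\alpha}$ kernel for $e_{\alpha,1}$ terms at the cost of a factor $1/\lambda_0$, which absorbs the two derivatives needed to pass from $f \in C_q^1([0,T];\mathcal{H}^d_{\mathcal{L}})$ to the $\mathcal{H}^{d+2}_{\mathcal{L}}$-norm of $u$.

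Multiplying by $(1+\lambda_k)^{d/2+1}$ and by $(1+\lambda_k)^{d/2}$ in the reformulation of $^cD_q^\alpha u_k = f_k - \lambda_0 u_k$, summing in $k$ via Parseval's identity exactly as in (\ref{additive4.16})--(\ref{additive4.17}) produces the combined estimate (\ref{additive4.21}) with $C_T := \max\{2,T\}$ (possibly adjusted to absorb the extra constant from the $\rho$ contribution and from $C_q$ of Lemma \ref{lem3.2}). Uniqueness is then a routine consequence of linearity: if $v_1$ and $v_2$ are two solutions, their difference $w = v_1 - v_2$ solves $^cD_q^\alpha w + \mathcal{L}w + mw = 0$ with $w(0) = 0$ and $D_q w(0) = 0$, forcing every Fourier coefficient $w_k$ to vanish by the scalar uniqueness, hence $w \equiv 0$. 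The main technical obstacle is verifying that the additional $\rho$-term does not destroy the smoothing gained from $1/\lambda_0$ in the forcing piece; this is precisely where the $\beta = 2$ instance of Lemma \ref{lem3.2} is decisive, since the factor $t$ is harmless on the bounded interval $[0,T]$ and the Mittag-Leffler bound ensures uniform control in $k$.
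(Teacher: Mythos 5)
Your proposal is correct and follows essentially the same route as the paper: eigenfunction expansion reducing to scalar $q$-fractional Cauchy problems, the three-term representation (\ref{additive4.23}) with the extra $t\rho_k e_{\alpha,2}$ piece, the $q$-integration-by-parts identity (\ref{additive4.14}) to trade $e_{\alpha,\alpha}$ for $e_{\alpha,1}$ and gain the factor $1/(\lambda_k+m)$, the bound from Lemma \ref{lem3.2}, and Parseval summation as in (\ref{additive4.25})--(\ref{additive4.27}), with uniqueness by linearity. No substantive differences.
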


\begin{proof}
{\it Existence.}  By repeating the arguments in the proof of Theorem \ref{thm4.1}, we   have the equation (\ref{additive4.7}) with  the initial conditions
\begin{eqnarray}\label{additive4.22}
 u_k(0) =\varphi_k, \;\;\;  D_{q}u_k(0) =\rho_k,\;\;\;k\in{I},  
\end{eqnarray}
and  a generalised solution in the following form: 
\begin{eqnarray}\label{additive4.23}
 u_k(t) &=&\varphi_ke_{\alpha,1}\left(-\left(\lambda_k+m\right)t^\alpha;q\right)+t\rho_ke_{\alpha,2}\left(-\left(\lambda_k+m\right)t^\alpha;q\right) \nonumber\\
 &+&\int\limits_0^tt^{\alpha-1}\left(qs/t;q\right)_{\alpha-1}\varepsilon^{-q^{\alpha}s}e_{\alpha,\alpha}\left(-\left(\lambda_k+m\right)t^\alpha;q\right)f_k(s)d_qs.
\end{eqnarray}

Using (\ref{additive4.5})  and  (\ref{additive4.23})  we obtain  the solution  (\ref{additive4.20}).

{\it Convergence}.    By using (\ref{additive4.14})  and  (\ref{additive4.23}), we get that 
\begin{eqnarray}\label{additive4.24}
 u_k(t) &\overset{ \text{(\ref{additive4.14})(\ref{additive4.23})}}{=}& \varphi_ke_{\alpha,1}\left(-\left(\lambda_k+m\right)t^\alpha;q\right)+t\rho_ke_{\alpha, 2}\left(-\left(\lambda_k+m\right)t^\alpha;q\right) \nonumber\\
 &+&\frac{1}{\lambda_k+m}f_k(t)e_{\alpha,1}\left(-\left(\lambda_k+m\right)t^\alpha(q^\alpha;q)_\alpha;q\right)\nonumber\\
 &-&\frac{1}{\lambda_k+m}f_k(0)e_{\alpha,1}\left(-\left(\lambda_k+m\right)t^\alpha;q\right)\nonumber\\
 &-&\frac{1}{\lambda_k+m}\int\limits_0^t \varepsilon^{-q^{\alpha+1}s}e_{\alpha,1}\left(-\left(\lambda_k+m\right)t^\alpha;q\right) D_{q,s}f_k(s)d_qs.
\end{eqnarray}

From (\ref{additive3.28}) and (\ref{additive4.24})   we conclude that
\begin{eqnarray}\label{additive4.25}
\left(1+\lambda_k \right)^{d/2}\left|\langle{u}(t),\psi_k\rangle_H \right|&\overset{ \text{(\ref{additive3.28})(\ref{additive4.24})}}{\leq}& C_T\left\{ \left(1+\lambda_k\right)^{d/2}\left|\langle\varphi,  \psi_k\rangle_H\right| \right.\nonumber\\
&+&\left(1+\lambda_k\right)^{d/2}\left|\langle\rho,  \psi_k\rangle_H \right| \nonumber\\
&+&\left(1+\lambda_k\right)^{d/2-1} \sup\limits_{0<{s}\leq{T}}\left|\langle f(t,\cdot),  \psi_k\rangle_H\right| \nonumber\\
&+&\left. \left(1+\lambda_k\right)^{d/2-1} \sup\limits_{0<{s}\leq{T}}\left|\langle D_q f(s,\cdot),  \psi_k\rangle_H\right|\right\}, 
\end{eqnarray}
where $C_T:=\max\{2, T\}$.

Since $\varphi, \rho\in  \mathcal{H}^{d+2}_{\mathcal{L}}$, $f\in C_q^1\left([0, T]; \mathcal{H}^d_{\mathcal{L}}\right)$,  using (\ref{additive4.9})  and (\ref{additive4.25})   we have that  \begin{eqnarray}\label{additive4.26}
\left(1+\lambda_k\right)^{d/2}\left|\langle ^cD^\alpha_{q}u(t), \psi_k\rangle_H\right|&\overset{ \text{(\ref{additive4.9})}}{\lesssim}&\left(1+\lambda_k\right)^{d/2}\left|u_k(t)\right|+\left|f_k(t)\right|\nonumber\\
&\overset{ \text{(\ref{additive4.25})}}{\leq}& 
 C_T\left\{\left|\langle\left(I+\mathcal{L}\right)^{d/2+1}\varphi, \psi_k\rangle_H\right|\right.\nonumber\\
 &+& \left|\langle\left(I+\mathcal{L}\right)^{d/2+1}\rho, \psi_k\rangle_H\right| \nonumber\\
&+&  \sup\limits_{0<{s}\leq{T}}\left|\langle \left(I+\mathcal{L}\right)^{d/2}f(t), \psi_k\rangle_H\right| \nonumber\\
&+&\left. \sup\limits_{0<{s}\leq{T}}\left|\langle \left(I+\mathcal{L}\right)^{d/2}D_{q}f(t), \psi_k\rangle_H\right|\right\},
\end{eqnarray} 
 and 
\begin{eqnarray}\label{additive4.27}
\left(1+\lambda_k\right)^{d/2}\left|\langle\left(I+\mathcal{L}\right)u(t), \psi_k\rangle_H\right| 
&\overset{ \text{\ref{additive4.25})}}{\leq}&C_T\left\{\left|\langle\left(I+\mathcal{L}\right)^{d/2+1}\varphi, \psi_k\rangle_H\right|\right.\nonumber\\
&+&\left|\langle\left(I+\mathcal{L}\right)^{d/2+1}\rho, \psi_k\rangle_H\right|  \nonumber\\
&+&  \sup\limits_{0<{s}\leq{T}}\left|\langle \left(I+\mathcal{L}\right)^{d/2}f(t), \psi_k\rangle_H\right|\nonumber\\
&+&\left.\sup\limits_{0<{s}\leq{T}}\left|\langle \left(I+\mathcal{L}\right)^{d/2}D_{q}f(t), \psi_k\rangle_H\right|\right\}.
\end{eqnarray}

According  (\ref{additive4.26})  and  (\ref{additive4.27})     we get  (\ref{additive4.21}).

Hence, the above estimates imply that
\begin{eqnarray*}
\|^cD^\alpha_{q} u(t)\|^2_{\mathcal{H}^d_{\mathcal{L}}}+\|u(t)\|^2_{\mathcal{H}^{d+2}_{\mathcal{L}}}\leq C_T\left[\|\varphi\|^2_{\mathcal{H}^{d+2}_{\mathcal{L}}} +\|\rho\|^2_{\mathcal{H}^{d+2}_{\mathcal{L}}}+ \|f\|^2_{C_q^1\left([0, T]; \mathcal{H}^d_{\mathcal{L}}\right)}\right], 
\end{eqnarray*}
which means that
$ u\in L_q^\infty\left([0, T]; \mathcal{H}^{d+2}_{\mathcal{L}} \right) \cap W_q^\alpha\left([0, T]; \mathcal{H}^d_{\mathcal{L}}\right)$.

{\it Uniqueness.} This part can be proved  similarly to the proof of  Theorem \ref{thm4.1},   so we omit the details.  The proof is complete.
\end{proof}

%%%%%%%%%%%%%%%%%%%%%%%%%%%%%
\section{Inverse problem}\label{sc5} 
%%%%%%%%%%%%%%%%%%%%%%%%%%%%%%%%

The subsection deals with an inverse problem concerning the time-fractional diffusion
equation. The problem is to find the couple  $(u(t), f )$ satisfying the equation:
\begin{eqnarray}\label{additive5.1}
^cD^\alpha_{q,t} u(t)+\mathcal{L}u(t)+mu(t)=f\in{H},   \;\;\;  t>0,
\end{eqnarray}
with initial data
\begin{eqnarray}\label{additive5.2}
u(0)&=&\varphi\in{H},
\end{eqnarray}
and final condition:
\begin{eqnarray}\label{additive5.3}
u(T)&=&\rho\in{H}, 
\end{eqnarray}
 where $m>0$.

\begin{thm}\label{thm5.1} Let $0<\alpha<1$ and $0<T<\infty$.  Assume that $\varphi, \rho\in \mathcal{H}^{d+2}_{\mathcal{L}}$. Then the generalised solution of  (\ref{additive5.1})-(\ref{additive5.3}), exists, is unique, and can be written in the form
\begin{eqnarray*} 
u(t)&=&\varphi  
+\sum\limits_{k\in{I}}\frac{ \left[\varphi_k- \rho_k\right]\left[e_{\alpha,1}\left(-\left(\lambda_k+m\right)t^\alpha;q\right)-1\right]\psi_k}{ \left[1-e_{\alpha,1}\left(-\left(\lambda_k+m\right)T^\alpha;q\right)\right]}, 
\end{eqnarray*}
and 
\begin{eqnarray*} 
f&=& \mathcal{L}\varphi-\sum\limits_{k\in{I}}\frac{\left(\lambda_k+m\right)\left[\varphi_k- \rho_k\right]\psi_k}{1-e_{\alpha,1}\left(-\left(\lambda_k+m\right)T^\alpha;q\right)}.
\end{eqnarray*}

Moreover,  
\begin{eqnarray*} 
u\in L^\infty_q\left([0, T]; \mathcal{H}^{d+2}_{\mathcal{L}}\right)\cap W^\alpha_q\left([0, T]; \mathcal{H}^d_{\mathcal{L}}\right);\;\;\; f\in\mathcal{H}^d_{\mathcal{L}},
\end{eqnarray*}
which satisfy  the estimates
\begin{eqnarray}\label{additive5.4}
\|^cD^\alpha_{q} u(t)\|^2_{\mathcal{H}^d_{\mathcal{L}}}&+&\|u(t)\|^2_{\mathcal{H}^{d+2}_{\mathcal{L}}}\lesssim  \| \varphi\|^2_{\mathcal{H}^{d+2}_{\mathcal{L}}}+\| \rho\|^2_{\mathcal{H}^{d+2}_{\mathcal{L}}};    \nonumber\\
\|f\|^2_{\mathcal{H}^d_{\mathcal{L}}}&\lesssim&\| \varphi\|^2_{\mathcal{H}^{d+2}_{\mathcal{L}}}+\| \rho\|^2_{\mathcal{H}^{d+2}_{\mathcal{L}}}.  
\end{eqnarray}
\end{thm}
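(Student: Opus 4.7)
The plan is to apply the $\mathcal{L}$-Fourier method as in the proofs of Theorems \ref{thm4.1} and \ref{thm4.2}, but here both $u$ and $f$ are unknown, so for each $k\in I$ we will have a scalar \emph{two-point boundary value problem in $t$} whose right-hand side is an unknown constant. Writing $u(t)=\sum_{k\in I}u_k(t)\psi_k$, $f=\sum_{k\in I}f_k\psi_k$, $\varphi=\sum_{k\in I}\varphi_k\psi_k$, $\rho=\sum_{k\in I}\rho_k\psi_k$, the pair $(u_k(t),f_k)$ must satisfy
\begin{equation*}
{}^cD^\alpha_{q}u_k(t)+(\lambda_k+m)u_k(t)=f_k,\qquad u_k(0)=\varphi_k,\qquad u_k(T)=\rho_k.
\end{equation*}
Using the representation (\ref{additive4.11}) of Theorem \ref{thm4.1} with a \emph{constant} forcing $f_k$, and simplifying the integral by the identity (\ref{additive4.13}) (which turns the integrand into a $q$-derivative), the forward evaluation reduces to
\begin{equation*}
u_k(t)=\varphi_k E_k(t)+\frac{f_k}{\lambda_k+m}\bigl(1-E_k(t)\bigr),\qquad E_k(t):=e_{\alpha,1}\bigl(-(\lambda_k+m)t^{\alpha};q\bigr).
\end{equation*}

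Imposing the terminal condition $u_k(T)=\rho_k$ and solving for $f_k$ gives
\begin{equation*}
f_k=\frac{(\lambda_k+m)\bigl(\rho_k-\varphi_k E_k(T)\bigr)}{1-E_k(T)},
\end{equation*}
and substituting this back yields, after regrouping with $\varphi_k=\varphi_k\cdot\frac{D_k+E_k(T)-1}{D_k}$ where $D_k:=1-E_k(T)$, exactly the stated formulas for $u(t)$ and $f$. Note that this step requires $D_k\neq 0$ for every $k$; since $0<\alpha<1$ and $(\lambda_k+m)T^{\alpha}>0$, Lemma \ref{lem3.1}, in particular (\ref{additive3.2}), ensures $0<E_k(T)<1$, so $D_k>0$ and the denominator is well defined.

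The main obstacle, and the place where Lemma \ref{lem3.1} is decisive, is showing that $D_k$ is \emph{uniformly bounded away from zero in a quantitative way} so that the series for $u$ and $f$ converge in the required Sobolev spaces. From the upper bound in (\ref{additive3.1}) one gets
\begin{equation*}
D_k=1-E_k(T)\;\geq\;\frac{\Gamma_q(\alpha+1)^{-1}(\lambda_k+m)T^{\alpha}}{1+\Gamma_q(\alpha+1)^{-1}(\lambda_k+m)T^{\alpha}},
\end{equation*}
which leads to the key estimate $\frac{\lambda_k+m}{D_k}\lesssim 1+\lambda_k$, with a constant depending only on $\alpha,m,T,q$. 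Combined with the trivial bound $0<E_k(T)<1$ from (\ref{additive3.2}), this gives $(1+\lambda_k)^{d/2}|f_k|\lesssim (1+\lambda_k)^{d/2+1}(|\varphi_k|+|\rho_k|)$ and, after analogous manipulation of the formula for $u_k(t)$, the same type of control for $(1+\lambda_k)^{d/2}|u_k(t)|$ uniformly in $t\in[0,T]$. Summing via Plancherel (\ref{additive4.1})--(\ref{additive4.2}) and then estimating ${}^cD^\alpha_q u_k(t)=f_k-(\lambda_k+m)u_k(t)$ in the same way gives the two estimates in (\ref{additive5.4}) and hence $u\in L^\infty_q\bigl([0,T];\mathcal{H}^{d+2}_{\mathcal{L}}\bigr)\cap W^\alpha_q\bigl([0,T];\mathcal{H}^d_{\mathcal{L}}\bigr)$ and $f\in\mathcal{H}^d_{\mathcal{L}}$. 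Uniqueness is handled as in Theorem \ref{thm4.1}: the difference of two solutions $(w,g):=(u_1-u_2,f_1-f_2)$ satisfies the forward problem with $\varphi=0$ and a constant source $g$, together with $w(T)=0$; inserting $g_k$ in the scalar formula above forces $g_k=0$ for every $k$, hence $w\equiv 0$ and $g=0$.
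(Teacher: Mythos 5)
Your proposal is correct and follows essentially the same route as the paper: the same $\mathcal{L}$-Fourier reduction to the scalar two-point problem, the same use of (\ref{additive4.11}) and (\ref{additive4.13}) to obtain $u_k(t)=\varphi_k E_k(t)+\frac{f_k}{\lambda_k+m}(1-E_k(t))$, the same determination of $f_k$ from the terminal condition, and the same use of the upper bound in (\ref{additive3.1}) to bound $1-E_k(T)$ below by $\frac{\Gamma_q(\alpha+1)^{-1}(\lambda_k+m)T^{\alpha}}{1+\Gamma_q(\alpha+1)^{-1}(\lambda_k+m)T^{\alpha}}$, which yields $\frac{\lambda_k+m}{1-E_k(T)}\lesssim 1+\lambda_k$ and hence the Sobolev estimates (\ref{additive5.4}) via Plancherel. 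The uniqueness argument via the homogeneous problem also matches the paper's.
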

 \begin{proof} {\it Existence.}  We partly repeat the arguments in the proof of Theorem \ref{thm4.1}. For this, substituting (\ref{additive4.5}) -(\ref{additive4.7})  into (\ref{additive5.1}), we have  
\begin{eqnarray}\label{additive5.5}
^cD^\alpha_{q,t} u_k(t)+\left(\lambda_k+m\right)u_k(t)=f_k,   
\end{eqnarray}
with   initial data
\begin{eqnarray}\label{additive5.6}
u_k(0)&=& \varphi_k, 
\end{eqnarray}
and the final condition  
\begin{eqnarray}\label{additive5.7}
u_k(T)&=&\rho_k,  
\end{eqnarray}
for all $k\in{I}$.

By using (\ref{additive4.11}) and (\ref{additive4.13}) we find  a general solution of the problem (\ref{additive5.5})-(\ref{additive5.7}): 
\begin{eqnarray}\label{additive5.8}
 u_k(t)&\overset{ \text{(\ref{additive4.11})}}{=}& \varphi_k e_{\alpha,1}\left(-\left(\lambda_k+m\right)t^\alpha;q\right)\nonumber\\
&+&f_k\int\limits_0^tt^{\alpha-1}\left(qs/t;q\right)_{\alpha-1} \varepsilon^{-q^{\alpha}s}e_{\alpha,\alpha}\left(-\left(\lambda_k+m\right)t^\alpha;q\right) d_qs\nonumber\\
&\overset{ \text{(\ref{additive4.13})}}{=}& \varphi_k e_{\alpha,1}\left(-\left(\lambda_k+m\right)t^\alpha;q\right)\nonumber\\
&+&\frac{f_k}{\lambda_k+m}\int\limits_0^tD_{q,s}\left[\varepsilon^{-s}e_{\alpha,1}\left(-\left(\lambda_k+m\right)t^\alpha;q\right)\right] d_qs\nonumber\\
&=& \varphi_k e_{\alpha,1}\left(-\left(\lambda_k+m\right)t^\alpha;q\right)+\frac{f_k}{\lambda_k+m}\left[1-e_{\alpha,1}\left(-\left(\lambda_k+m\right)t^\alpha;q\right)\right],  
\end{eqnarray}
where the constant    $f_k$ needs to be determined.

Now, by using the conditions   (\ref{additive5.6}), (\ref{additive5.7}) and (\ref{additive5.8})   we get that 
\begin{eqnarray*}
u_k(T)\overset{ \text{(\ref{additive5.7})}}{=}\rho_k&\overset{ \text{(\ref{additive5.6})(\ref{additive5.8})}}{=}& \varphi_k e_{\alpha,1}\left(-\left(\lambda_k+m\right)T^\alpha;q\right)\nonumber\\
&+&\frac{f_k}{\lambda_k+m}\left[1-e_{\alpha,1}\left(-\left(\lambda_k+m\right)T^\alpha;q\right)\right],
\end{eqnarray*}
so that 
\begin{eqnarray}\label{additive5.9}
f_k&=&\frac{\left(\lambda_k+m\right)\rho_k-\left(\lambda_k+m\right)\varphi_k e_{\alpha,1}\left(-\left(\lambda_k+m\right)T^\alpha;q\right)}{1-e_{\alpha,1}\left(-\left(\lambda_k+m\right)T^\alpha;q\right)}\nonumber\\
&=& \left(\lambda_k+m\right)\varphi_k-\frac{\left(\lambda_k+m\right)\varphi_k-\left(\lambda_k+m\right)\rho_k}{1-e_{\alpha,1}\left(-\left(\lambda_k+m\right)T^\alpha;q\right)}.
\end{eqnarray}

From (\ref{additive5.8})-(\ref{additive5.9}) it is follows that 
\begin{eqnarray}\label{additive5.10}
u_k&\overset{ \text{(\ref{additive5.8})(\ref{additive5.9})}}{=}&\varphi_k e_{\alpha,1}\left(-\left(\lambda_k+m\right)t^\alpha;q\right)+\varphi_k\left[1-e_{\alpha,1}\left(-\left(\lambda_k+m\right)t^\alpha;q\right)\right]\nonumber\\
&-&\frac{\left[\left(\lambda_k+m\right)\varphi_k-\left(\lambda_k+m\right)\rho_k\right]\left[1-e_{\alpha,1}\left(-\left(\lambda_k+m\right)t^\alpha;q\right)\right]}{\left(\lambda_k+m\right)\left[1-e_{\alpha,1}\left(-\left(\lambda_k+m\right)T^\alpha;q\right)\right]}\nonumber\\
&=&\varphi_k  
+\frac{\left[ \varphi_k- \rho_k\right]\left[e_{\alpha,1}\left(-\left(\lambda_k+m\right)t^\alpha;q\right)-1\right]}{ \left[1-e_{\alpha,1}\left(-\left(\lambda_k+m\right)T^\alpha;q\right)\right]}.
\end{eqnarray}

Therefore, by using  (\ref{additive4.5}),   (\ref{additive4.6}),  (\ref{additive5.9}) and (\ref{additive5.10})  we get that 
\begin{eqnarray*} 
u(t)&\overset{ \text{(\ref{additive4.5})-(\ref{additive5.10})}}{=}&\varphi 
+\sum\limits_{k\in{I}}\frac{\left[ \varphi_k- \rho_k\right]\left[e_{\alpha,1}\left(-\left(\lambda_k+m\right)t^\alpha;q\right)-1\right]\psi_k}{ \left[1-e_{\alpha,1}\left(-\left(\lambda_k+m\right)T^\alpha;q\right)\right]}, 
\end{eqnarray*}
 and 
\begin{eqnarray*}  
f&\overset{ \text{(\ref{additive4.6})-(\ref{additive5.9})}}{=}& \mathcal{L}\varphi-\sum\limits_{k\in{I}}\frac{\left(\lambda_k+m\right)\left[\varphi_k- \rho_k\right]\psi_k}{1-e_{\alpha,1}\left(-\left(\lambda_k+m\right)T^\alpha;q\right)}.
\end{eqnarray*}

{\it Convergence}. For $0<t<T$, using the estimate   (\ref{additive3.1}), we get
\begin{eqnarray*} 
  1-e_{\alpha,1}\left(-\left(\lambda_k+m\right)T^\alpha;q\right) &\overset{ \text{(\ref{additive3.1})}}{\geq}& 1-\frac{1}{1+\Gamma_q\left(\alpha+1\right)^{-1}\left(\lambda_k+m\right)T^\alpha} \nonumber\\
&=&\frac{\Gamma_q\left(\alpha+1\right)^{-1}\left(\lambda_k+m\right)T^\alpha}{1+\Gamma_q\left(\alpha+1\right)^{-1}\left(\lambda_k+m\right)T^\alpha}.
\end{eqnarray*}

Therefore
\begin{equation}\label{additive5.11}
 0\leq1-e_{\alpha,1}\left(-\left(\lambda_k+m\right)t^\alpha;q\right) <1,\;\;\;0<t\leq T.
\end{equation}

From (\ref{additive5.9}), 
 (\ref{additive5.10}) and (\ref{additive5.11})   we conclude that
\begin{eqnarray}\label{additive5.12}
|u_k|\overset{ \text{(\ref{additive5.10})(\ref{additive5.11})}}{\lesssim} 
 |\varphi_k|+|\rho_k|;\;\;\;  |f_k|\overset{ \text{(\ref{additive5.9})(\ref{additive5.11})}}{\lesssim} 
 \left(\lambda_k+1\right)|\varphi_k|+\left(\lambda_k+1\right)|\rho_k|. 
 \end{eqnarray}

We assume that $\varphi,\rho\in \mathcal{H}^{d+2}_{\mathcal{L}}$.  Then using (\ref{additive4.2}), (\ref{additive4.9}) and (\ref{additive5.12}) we derive that
\begin{eqnarray}\label{additive5.13}  
\|f\|^2_{\mathcal{H}^d_{\mathcal{L}}}&\overset{ \text{(\ref{additive4.2})}}{=}&\sum\limits_{k\in I}\left|\left(1+\lambda_k\right)^{d/2}\langle f, \psi_k\rangle_H\right|^2   \nonumber\\
&\overset{ \text{(\ref{additive5.12})}}{\lesssim}&\sum\limits_{k\in I}\left|\left(1+\lambda_k\right)^{d/2+1}\langle \varphi, \psi_k\rangle_H\right|^2+\sum\limits_{k\in I}\left|\left(1+\lambda_k\right)^{d/2+1}\langle \rho, \psi_k\rangle_H\right|^2\nonumber\\
&\overset{ \text{(\ref{additive4.2})}}{=}&\| \varphi\|^2_{\mathcal{H}^{d+2}_{\mathcal{L}}}+\| \rho\|^2_{\mathcal{H}^{d+2}_{\mathcal{L}}}<\infty,  
\end{eqnarray}
and
\begin{eqnarray}\label{additive5.14} 
\|\left(I+\mathcal{L}\right)u(t)\|^2_{\mathcal{H}^d_{\mathcal{L}}}&\overset{ \text{(\ref{additive4.2})}}{=}&\sum\limits_{k\in I}\left|\left(1+\lambda_k\right)^{d/2+1}\langle  u(t), \psi_k\rangle_H\right|^2    \nonumber\\
&\overset{ \text{(\ref{additive5.12})}}{\lesssim}& \sum\limits_{k\in{I}}\left[|\left(1+\lambda_k\right)^{d/2+1}\langle\varphi, \psi_k\rangle_H|^2+|\left(1+\lambda_k\right)^{d/2+1}\langle\rho, \psi_k\rangle_H|^2\right]\nonumber\\
&\overset{ \text{(\ref{additive4.2})}}{=}&\| \varphi\|^2_{\mathcal{H}^{d+2}_{\mathcal{L}}}+\| \rho\|^{d+2}_{\mathcal{H}^{d+2}_{\mathcal{L}}}<\infty. 
\end{eqnarray}

By using (\ref{additive4.9}), (\ref{additive5.13}) and (\ref{additive5.14}) we find that 
\begin{eqnarray*}
 \|^cD^\alpha_{q,t}u(t)\|^2_{\mathcal{H}^{d}_{\mathcal{L}}}
&\overset{ \text{(\ref{additive4.9})}}{\lesssim}& \|\left(I+\mathcal{L}\right)u(t)\|_{\mathcal{H}^{d}_{\mathcal{L}}}+\|f\|_{\mathcal{H}^d_{\mathcal{L}}}\\
&\overset{ \text{(\ref{additive5.13})(\ref{additive5.14})}}{\lesssim}&\| \varphi\|^2_{\mathcal{H}^{d+2}_{\mathcal{L}}}+\| \rho\|^{d+2}_{\mathcal{H}^{d+2}_{\mathcal{L}}}<\infty,
\end{eqnarray*} 
which means that $ u\in L^\infty_q\left([0, T]; \mathcal{H}^{d+2}_{\mathcal{L}}\right)\cap W^\alpha_q\left([0, T]; \mathcal{H}^d_{\mathcal{L}}\right)$ and $f\in\mathcal{H}^d_{\mathcal{L}}$,  and  the estimates (\ref{additive5.4}) hold.

{\it Uniqueness}. The obtained solution  (\ref{additive5.10})-(\ref{additive5.11}) satisfies the equation (\ref{additive5.1}) and the conditions (\ref{additive5.2})-(\ref{additive5.3}).

We denote  $v(t):=u_1(t)-u_2(t)$ and $f:=f_1-f_2$. Then   $u(t)$ and $f$ satisfy (\ref{additive5.1})  and homogeneous conditions (\ref{additive5.2}) 
and (\ref{additive5.3}).

Applying the inner product on Hilbert space $H$ and the equation (\ref{additive5.1}) we obtain that
\begin{eqnarray*}
\left(1+\lambda_k\right)^cD^\alpha_{q,t} u_k(t)&=&\langle ^cD^\alpha_{q,t}\left[ u(t)+\mathcal{L}u(t)\right], \psi_k\rangle_H\nonumber\\
&=&\langle-mu(t)+f,\psi_k\rangle_H\nonumber\\
&=&-mu_k(t)+f_k. 
\end{eqnarray*}

Taking into account the homogeneous conditions (\ref{additive5.2}) and (\ref{additive5.3}), we find that  $u_k(0) = 0$ and  $u_k(T)=0$.  Therefore $f_k\equiv0$ and $u_k \equiv0$ for $k\in{I}$.

Further, by the completeness of the system $\{\psi_k\}_{k\in{I}}$ in $H$  we obtain
\begin{eqnarray*}
f (t) \equiv0, \;\;\;u (t) \equiv0,  
\end{eqnarray*}
for $t>0$.

Hence, the uniqueness of the solution (\ref{additive5.10})-(\ref{additive5.11}) is proved. The proof is complete.

\end{proof}

\begin{center}

\end{center}

\end{document}